\newtheorem{theorem}{Theorem}
\newtheorem{prop}[theorem]{Proposition}
\newtheorem{lemma}[theorem]{Lemma}
\newtheorem{definition}[theorem]{Definition}
\newtheorem{rmk}[theorem]{Remark}
\def\diam{\text{diam}}
\begin{document}

\title{Rigidity of acute triangulations of the plane}

\renewcommand{\theequation}{\arabic{section}.\arabic{subsection}.\arabic{equation}}
\numberwithin{equation}{section}
\numberwithin{theorem}{section}

\author{Tianqi Wu}
\address{Department of Mathematics, Clark University, 950 Main St, Worcester, MA 01610, USA
}
\email{mike890505@gmail.com}

\maketitle

\begin{abstract}
    We show that a uniformly acute triangulation of the plane is rigid under Luo’s discrete conformal change, extending previous results on hexagonal triangulations. Our result is a discrete analogue of the conformal rigidity of the plane. 
    We followed He's analytical approach in his work on the rigidity of disk patterns. The main tools include maximum principles, a discrete Liouville theorem, smooth and discrete extremal lengths on networks. The key step is relating the Euclidean discrete conformality to the hyperbolic discrete conformality, to obtain an $L^\infty$ bound on the discrete conformal factor.
\end{abstract}

\tableofcontents

\section{Introduction}
A fundamental property in conformal geometry is that
a conformal embedding of the plane $\mathbb R^2$ to itself must be a similar transformation.
In this paper we discretize the plane by triangulations and prove a similar rigidity result under the notion of discrete conformal change introduced by Luo \cite{luo2004combinatorial}.

Let $T=(V,E,F)$ be an (infinite) simplicial topological triangulation of the Euclidean plane $\mathbb R^2$, where $V$ is the set of vertices, $E$ is the set of edges and $F$ is the set of faces. 
Given a subcomplex $T_0=(V_0,E_0,F_0)$ of $T$, 
denote $|T_0|$ as the underlying space of $T$. 
An embedding (\emph{resp.} homeomorphism) $\phi:|T_0|\rightarrow\mathbb R^2$ is called \emph{geodesic} if $\phi$ maps each edge of $T_0$ to a geodesic arc, i.e., a straight closed line segment.
A \emph{piecewise linear metric} (\emph{PL metric} for short) on $T_0$ is represented by an edge length function $l\in\mathbb R^{E_0}_{>0}$ satisfying the triangle inequalities. 
A geodesic embedding $\phi$ of $T_0$ naturally induces a PL metric $l=l(\phi)$ on $T_0$ by letting $l_{ij}=|\phi(i)-\phi(j)|_2$. 
Luo \cite{luo2004combinatorial} introduced the following notion of discrete conformality.
\begin{definition}[Luo \cite{luo2004combinatorial}]
\label{def}
Two PL metrics $l,l'$ on $T_0=(V_0,E_0,F_0)$ are \emph{discretely conformal} if there exists some  $u\in\mathbb R^{V_0}$ such that for any edge $ij\in E_0$
$$
l'_{ij}=e^{\frac{1}{2}(u_i+u_j)}l_{ij}.
$$
In this case, $u$ is called a \emph{discrete conformal factor}, and we denote $l'=u*l$. 
\end{definition}
Given a PL metric $l$ on $T_0$, let $\theta^i_{jk}$ denote the inner angle at the vertex $i$ in the triangle $\triangle ijk$ under the metric $l$.
Then $l$ is called 

\begin{enumerate}[label=(\alph*)]
    
\item \emph{uniformly nondegenerate} if there exists a constant $\epsilon>0$ such that $\theta^i_{jk}\geq\epsilon$ for all $\triangle ijk$ in $T_0$, and

\item \emph{uniformly acute} if there exists a constant $\epsilon>0$ such that $\theta^i_{jk}\leq\pi/2-\epsilon$ for all $\triangle ijk$ in $T_0$, and

\item \emph{Delaunay} if 
$\theta^k_{ij}+\theta^{k'}_{ij}\leq\pi$ for any pair of adjacent triangles $\triangle ijk$ and $\triangle ijk'$ in $T_0$.

\end{enumerate}
A uniformly acute PL metric is clearly uniformly nondegenerate and Delaunay. 
The main result of the paper is the following.
\begin{theorem}
\label{main}
Suppose $\phi$ is a geodesic homeomorphism of $T$ and $\psi$ is a geodesic embedding of $T$. If $l(\phi),l(\psi)$ are discretely conformal and both uniformly acute, then they differ by a constant scaling.
\end{theorem}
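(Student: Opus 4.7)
The plan is to show that the discrete conformal factor $u\in\mathbb R^V$ determined by $l(\psi)=u*l(\phi)$ is constant; once this is known, $l(\psi)=e^c l(\phi)$ for some $c$, and standard rigidity of PL realizations on the simply connected complex $|T|$ forces $\psi$ to agree with $\phi$ up to a similarity of $\mathbb R^2$.

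\textbf{Step 1: from geometry to a discrete harmonic equation.} For any nondegenerate geodesic realization of $|T|$ into $\mathbb R^2$, the inner angles at each vertex must sum to $2\pi$: the star of $i$ is a topological disk, consecutive triangles share edges, and the fan around the image of $i$ closes up with total angle a positive multiple of $2\pi$; injectivity of the embedding (or homeomorphism) forces that multiple to be $1$. Hence at every $i$ the angle sum $\Theta_i$ equals $2\pi$ under both $l(\phi)$ and $l(\psi)$, so $\Theta_i(u)=\Theta_i(0)=2\pi$. By Luo's cotangent formulas for angle derivatives and the fundamental theorem of calculus, this identity rewrites as $\sum_{j\sim i}w_{ij}(u_j-u_i)=0$, where the weights $w_{ij}$ are time averages of cotangent sums of the two angles opposite $ij$ along the segment from $l(\phi)$ to $l(\psi)$. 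Uniform acuteness of both endpoint metrics keeps all intermediate angles strictly between $0$ and $\pi/2$, so the $w_{ij}$ are uniformly bounded above and below by positive constants. A uniform lower bound on angles also forces the degree of each vertex to be uniformly bounded. Thus $u$ is harmonic with respect to a uniformly elliptic discrete Laplacian on the bounded-degree planar network $(V,E)$.

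\textbf{Step 2: $L^\infty$ control via the hyperbolic side.} I expect this to be the main obstacle. Following He's disk-pattern strategy, the idea is to pass from the Euclidean to the hyperbolic picture by decorating each vertex with a horocycle and reinterpreting the Euclidean edge length $l_{ij}$ as the hyperbolic distance between the horocycles at $i$ and $j$; a Euclidean conformal change then corresponds to a hyperbolic conformal change together with a rescaling of horocycle radii. Uniform Euclidean acuteness translates into uniform control on the resulting hyperbolic triangles. I would then compare smooth extremal lengths of large annular regions in $|T|$ under $l(\phi)$ and $l(\psi)$ with their discrete counterparts on the combinatorial network: if $u$ were unbounded above or below along some sequence of vertices, the hyperbolic identification would force one of these extremal lengths to blow up while the other remained bounded on the uniformly acute side, contradicting the smooth/discrete comparison. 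This yields a global bound $\|u\|_{L^\infty(V)}<\infty$.

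\textbf{Step 3: Liouville and conclusion.} With $u$ bounded and harmonic for a uniformly elliptic discrete Laplacian on a bounded-degree planar network, a discrete Liouville theorem forces $u$ to be constant, completing the proof. The technical heart of the argument is clearly Step 2: Steps 1 and 3 follow familiar templates (Luo's variational formulas and discrete potential theory), but translating between Euclidean and hyperbolic discrete conformality and matching smooth with discrete extremal lengths on an acute triangulation is where the substantive work must happen.
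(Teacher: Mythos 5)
Your high-level architecture (ellipticity of the conformal factor equation, an $L^\infty$ bound, then a discrete Liouville theorem) does match the paper's strategy, but Step 1 contains a genuine gap that the paper is specifically engineered to avoid. You claim that integrating Luo's formula along the linear segment $t\mapsto tu$, $t\in[0,1]$, produces weights $w_{ij}=\int_0^1\eta_{ij}(tu)\,dt$ that are uniformly positive because ``uniform acuteness of both endpoint metrics keeps all intermediate angles strictly between $0$ and $\pi/2$.'' This is unjustified and false in general: writing the acuteness condition for a triangle as $l_{12}^2e^{u_1+u_2}<l_{13}^2e^{u_1+u_3}+l_{23}^2e^{u_2+u_3}$ and taking logarithms, the acute region is a set of the form $\{\text{concave}<0\}$ (linear minus log-sum-exp), i.e.\ an intersection of \emph{complements} of convex sets in the $u$-coordinates; the same is true of the triangle-inequality region and the Delaunay region. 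A segment between two admissible acute points can therefore leave the acute (indeed even the nondegenerate or Delaunay) locus, so $\eta_{ij}(tu)$ can become negative or undefined and your averaged weights need not be positive. The paper circumvents this in two ways: Lemma \ref{maximum principle} is a maximum principle comparing two flat Delaunay metrics \emph{directly}, with no interpolation between them; and in Section \ref{u is constant} harmonicity is extracted not for $\bar u$ itself but for the time-derivative $\dot u^*(t)$ of a flow confined to $\{|u|_\infty<2\delta\}$, where $\delta$ is chosen small enough that acuteness (hence positivity of $\eta_{ij}$) persists. The contradiction then comes from combining the Liouville theorem applied to $\dot u^*(t)$ with the estimate $|\bar u_i-u_i^*(\delta)|\le|\bar u|_\infty-\delta$, which is a genuinely different (and more delicate) argument than ``$\bar u$ is bounded harmonic, hence constant.''

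Step 2 is the part you correctly identify as the heart of the matter, but what you offer is a program rather than a proof, and the mechanism you gesture at (horocycle decorations, blowing up extremal lengths) is not the one that works here. The paper's actual estimate, Lemma \ref{compare}, is a one-sided lower bound $u_i\ge\log(r'/r)-M(\epsilon)$ obtained from the identity $u_i^h=u_i+\log\frac{1-|z_i|^2}{1-|z_i'|^2}$ relating Euclidean and hyperbolic discrete conformal factors for vertices placed in the Poincar\'e disk, together with a hyperbolic maximum principle (Lemma \ref{hyperbolic maximum}) whose proof requires showing that a uniformly acute Euclidean star can be re-realized as a hyperbolic Delaunay star with the same vertices (Lemma \ref{no overlap}) --- this is where acuteness is used essentially, not merely for uniform ellipticity. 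The two-sided bound on $\bar u_j-\bar u_{j'}$ then comes from applying this one-sided estimate in both directions across an annulus whose image under the quasiconformal map $\psi\circ\phi^{-1}$ is controlled by the modulus estimates of Lemma \ref{modulus} and the geometric Lemma \ref{43}. None of this quantitative content is present in your sketch, so as written the proposal does not constitute a proof of the boundedness of $u$, and without that bound Step 3 cannot be run.
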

Wu-Gu-Sun \cite{wu2015rigidity} first proved Theorem \ref{main} for the special case where $\phi(T)$ is a regular hexagonal triangulation.
Dai-Ge-Ma \cite{ dai2022rigidity} and Luo-Sun-Wu \cite{luo2020discrete} 
generalized Wu-Gu-Sun's result by allowing $l(\psi)$ to be only Delaunay rather than uniformly acute.
All these works essentially rely on the lattice structure of the embedded vertices $\phi(V)$, and apparently cannot be generalized to triangulations without translational invariance. 
To prove Theorem \ref{main}, we adopted a different approach, which is developed by He \cite{he1999rigidity} 
in his state-of-art work on the rigidity of disk patterns.

\subsection{Other Related Works}
After Luo introducing the Definition \ref{def}, various properties regarding the rigidity and convergence of the discrete conformality were discussed in \cite{bobenko2015discrete}\cite{wu2015rigidity}\cite{gu2019convergence}\cite{wu2020convergence}\cite{luo2020discrete}\cite{luo2021convergence}\cite{dai2022rigidity}.
To solve the problem of singularity in the discrete Yamabe flow, Gu et al. \cite{gu2018discrete}\cite{gu2018discrete2} proposed a revised notion of discrete conformality for piecewise Euclidean (or hyperbolic) metrics on closed surfaces with marked points, and perfectly solved the prescribed curvature problem. This major improvement in the theory of discrete conformality inspired new advanced numerical methods in computing conformal maps \cite{sun2015discrete}\cite{gillespie2021discrete}\cite{campen2021efficient}, as well as further theoretical investigations \cite{springborn2019ideal}\cite{luo2019koebe}.
Gu et al. \cite{gu2018discrete}\cite{gu2018discrete2} proposed
to use the discrete Yamabe flow to numerically compute the target metric in the prescribed curvature problem.
Since the discrete Yamabe flow may pass through different combinatorial triangulations, diagonal switches might be needed along the flow. In \cite{wu2014finiteness} it is proved that only finitely many diagonal switches are needed in a Yamabe flow. Other works on discrete geometric flows or deformations of triangle meshes could be found in \cite{zhang2014unified}\cite{ge2018combinatorial} \cite{zhu2019combinatorial}\cite{feng2020combinatorial}\cite{wu2021fractional}\cite{luo2021deformation}\cite{luo2021deformation2}\cite{luo2022deformation}\cite{luo2022spaces}.

\subsection{Notations and Conventions}
In the remaining of the paper, we will identify the plane $\mathbb R^2$ as the complex plane $\mathbb C$. 
Given $0<r<r'$, denote $D_r=\{z\in\mathbb C:|z|<r\}$
and
$A_{r,r'}=\{z\in\mathbb C:r<|z|<r'\}$.
We also denote $D=D_1$ as the unit open disk.
Given a subset $A$ of $\mathbb C$, 
$A^c$ denote the complement $\mathbb C\backslash A$ and $\partial A$ denotes the boundary of $A$ in $\mathbb C$. 
Given two subsets $A,B$ of $\mathbb C$, the diameter of $A$ is denoted by
$$
\diam(A)=\sup\{|z-z'|:z,z'\in A\},
$$
and
the distance between $A,B$ is denoted by
$$
d(A,B)=\inf\{|z-z'|:z\in A,z'\in B\}.
$$

Given a subset $V_0$ of $V$, we use the following notations and conventions.
\begin{enumerate}[label=(\alph*)]
    \item The \emph{complement} of $V_0$ is denoted as 
$V_0^c=V\backslash V_0$.

\item The \emph{boundary} of $V_0$ is denoted as
$$
\partial V_0=\{i\in V_0:\text{there exists $j\in V_0^c$ such that $ij\in E$}\}.
$$

\item The \emph{interior} of $V_0$ is denoted as
$$
int(V_0)=V_0\backslash\partial V_0=
\{i\in V_0:j\in V_0\text{ if }ij\in E\}.
$$

\item The \emph{closure} of $V_0$ is denoted as
$$
\overline {V_0}=V_0\cup\partial (V_0^c)=
(int(V_0^c))^c.
$$

\item The subcomplex generated by $V_0$ is denoted as $T(V_0)$.

\item Denote $E(V_0)=\{ij\in E:i\in  int(V_0)\text{ or }j\in  int(V_0)\}$. Notice that $E(V_0)$ generally is not the set of edges in $T(V_0)$.

\item A real-valued function on $V_0$ is often identifies as a vector in $\mathbb R^{V_0}$.
\end{enumerate}
Given $i\in V$, the \emph{1-ring neighborhood} of $i$ is the subcomplex generated by $i$ and its neighbors. In other words, the 1-ring neighborhood of $i$ is
$$
T(\{i\}\cup\{j\in V:ij\in E\}).
$$
Furthermore, we denote $R_i$ as the underlying space of the 1-ring neighborhood of $i$.
Given a subcomplex $T_0=(V_0,E_0,F_0)$ of $T$ and $l\in\mathbb R^{E_0}$ and $u\in\mathbb R^{V_0}$, if $u*l$ is a PL metric then
\begin{enumerate}[label=(\alph*)]

\item $\theta^i_{jk}(u)=\theta^i_{jk}(u,l)$ denotes 
the inner angle of $\triangle ijk$ at $i$ under $u*l$, and

\item $K_i(u)=K_i(u*l)$ denotes the discrete curvature
$$
K_i(u)=2\pi-\sum_{jk:\triangle ijk\in F}\theta^i_{jk}(u).
$$
\end{enumerate}

\subsection{Organization of the Paper}
In Section 2 we introduce necessary properties and tools for the proof of the main theorem. The proof of main Theorem \ref{main} is given in Section 3. Section 4 gives a proof of a discrete Liouville theorem, which is used in proving Theorem \ref{main}. Section 5 proves a key estimate for the discrete conformal factor by relating to the hyperbolic discrete conformality.

\subsection{Acknowledgement} 
The work is supported in part by NSF 1760471.

\section{Preparations for the Proof}
\subsection{Extremal Length and Modulus of Annuli}
We briefly review the notions of extremal length and conformal modulus. The definitions and properties discussed here are mostly well-known. One may refer \cite{ahlfors2010conformal} and \cite{lehto1973quasiconformal} for more comprehensive introductions.

A \emph{closed annulus} is a subset of $\mathbb C$ that is homeomorphic to $\{z\in\mathbb C:1\leq|z|\leq2\}$.
An \emph{(open) annulus} is the interior of a closed annulus.
Given an annulus $A$,
denote $\Gamma=\Gamma(A)$ as the set of smooth simple closed curves in $A$ separating the two boundary components of $A$. A real-valued Borel measurable function $f$ on $A$ is called \emph{admissible} if
$
\int_\gamma fds\geq1
$
for all $\gamma\in\Gamma$.
Here $ds$ denotes the element of arc length.
The \emph{(conformal) modulus} of $A$ is defined as
$$
\text{Mod}(A)=\inf\{ \int_A f^2:f\text{ is admissible}\},
$$
where $ \int_A f^2$ denotes the integral of $f(z)^2$ against the 2-dim Lebesgue measure on $A$.
From the definition it is straightforward to verify that $\text{Mod}(A)$ is conformally invariant. 
Furthermore, if $f:A\rightarrow A'$ is a $K$-quasiconformal homeomorphism between two annuli, then
$$
\frac{1}{K}\cdot \text{Mod}(A)\leq\text{Mod}(A')
\leq{K}\cdot\text{Mod}(A).
$$
Given $0<r<r'$, denote $A_{r,r'}$ as the annulus $\{z\in\mathbb C:r<|z|<r'\}$. 
It is well-known that
$$
\text{Mod}(A_{r,r'})=\frac{1}{2\pi}\log\frac{r'}{r}.
$$
Intuitively, the conformal modulus measures the relative thickness of an annulus.
If an annulus $A$ in $\mathbb C\backslash\{0\}$ contains $A_{r,r'}$, then it is ``thicker" than $A_{r,r'}$ and one can show that
$$
\text{Mod}(A)\geq\text{Mod}(A_{r,r'})
=\frac{1}{2\pi}\log\frac{r'}{r}.
$$

On the other hand, we have that
\begin{lemma}
\label{modulus}
Suppose $A\subseteq\mathbb C\backslash\{0\}$ is an annulus separating $0$ from the infinity. If $\text{Mod}(A)\geq100$, then
$A\supseteq A_{r,2r}$ for some $r>0$.
\end{lemma}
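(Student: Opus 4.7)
The plan is to separate the lemma into a topological reduction and a geometric modulus estimate. Let $K_1,K_2$ be the two components of $\mathbb{C}\setminus A$, with $K_1\ni 0$ (bounded, hence compact) and $K_2$ unbounded (so $\infty\in K_2$ in $\overline{\mathbb{C}}$); both are connected continua. Set $r_+=\max_{z\in K_1}|z|$ and $r_-=\min_{z\in K_2}|z|$ (both attained). Since $z\mapsto|z|$ is continuous and each $K_i$ is connected, the intermediate value theorem gives $\{|z|:z\in K_1\}=[0,r_+]$ and $\{|z|:z\in K_2\}=[r_-,\infty)$, so a full circle $\{|z|=\rho\}$ lies in $A$ iff $\rho\in(r_+,r_-)$. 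Using that each $K_i$ is connected and hence contained in one of the two disjoint closed sets $\{|z|\leq r\}$, $\{|z|\geq 2r\}$ whose union is $\mathbb{C}\setminus A_{r,2r}$, a short topological check shows that $A\supseteq A_{r,2r}$ is equivalent to $r_+\leq r$ and $2r\leq r_-$; thus the existence of such an $r$ is equivalent to $r_-\geq 2r_+$. The lemma is therefore reduced to showing that $\text{Mod}(A)\geq 100$ forces $r_-\geq 2r_+$.

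I prove this by contrapositive. Assume $r_-<2r_+$; by the scale invariance of the modulus, normalize $r_+=1$, so $r_-\in[0,2)$. Pick $z_1\in K_1$ with $|z_1|=1$ and $z_2\in K_2$ with $|z_2|=r_-$. Then the complementary continua of $A\subseteq\overline{\mathbb{C}}$ contain the pairs $\{0,z_1\}$ and $\{z_2,\infty\}$, and by Teichm\"uller's module theorem $\text{Mod}(A)$ is bounded above by an explicit function of the cross-ratio of these four points. Since $|z_1|=1$ and $|z_2|<2$ keep this cross-ratio in a bounded range, the resulting bound is an explicit finite constant $M_0$ which is easily checked to be much smaller than $100$, giving the desired contradiction.

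The main obstacle is extracting this explicit numerical modulus bound. Since $100$ is a convenient but non-sharp constant, a self-contained alternative to invoking Teichm\"uller's theorem is a direct length-area estimate: take the admissible metric $\rho(z)=(2\pi|z|)^{-1}\mathbf{1}_{A_{1/4,4}}(z)$ for the family $\Gamma_1$ of loops in $A$ separating $K_1$ from $K_2$, verify that $\int_\gamma\rho\,ds\geq c$ for a universal positive constant $c$ and every $\gamma\in\Gamma_1$ (using that $z_1,z_2\in A_{1/2,2}$ forces the loop to thread through the bounded annulus $A_{1/4,4}$ in a controlled way), and bound $\int_A\rho^2\,dA\leq(2\pi)^{-2}\int_{A_{1/4,4}}|z|^{-2}\,dA=\log 16/(2\pi)$. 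This yields $\text{Mod}(A)\leq\log 16/(2\pi c^2)$, which for a suitable $c$ is comfortably below $100$, completing the proof.
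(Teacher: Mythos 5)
Your proposal is correct, and its overall architecture matches the paper's: both reduce the statement to showing that $\text{Mod}(A)\geq 100$ forces $r_-\geq 2r_+$ (in the paper's notation, $R\geq 2r$), and both establish the contrapositive by exhibiting an explicit admissible function that caps the modulus well below $100$. The difference is in the test metric. The paper takes the constant density $1/r$ on $A\cap D_{3r}$ and splits into the cases $\gamma\subseteq D_{3r}$ (where the length of a loop enclosing the inner continuum $B$ is at least $2\,\diam(B)\geq 2r$) and $\gamma\not\subseteq D_{3r}$ (where $\gamma$ must cross $A_{2r,3r}$), obtaining $\text{Mod}(A)\leq 9\pi$. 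You instead propose either Teichm\"uller's module theorem --- which the paper's own remark after the lemma explicitly acknowledges as an alternative route --- or the logarithmic density $(2\pi|z|)^{-1}$ on $A_{1/4,4}$; the latter buys a scale-invariant integrand and a winding-number lower bound for loops staying in the annulus, at the cost of a slightly more delicate case analysis for loops that exit it. One small inaccuracy in your sketch: you assert $z_2\in A_{1/2,2}$, but $r_-=\min_{K_2}|z|$ can be arbitrarily small (e.g., when $K_1$ is a segment from $0$ to a point of modulus $1$ and $A$ hugs it tightly), so you only know $z_2\in \overline{D_2}$. This does not break the argument --- the two facts you actually need are that every $\gamma\in\Gamma_1$ meets $\{|z|\geq 1\}$ (it crosses the ray from $z_1$ to $\infty$) and meets $\{|z|\leq 2\}$ (it crosses the segment $[z_1,z_2]\subseteq\overline{D_2}$), which together with the winding-number case give $\int_\gamma\rho\,ds\geq \log 2/(2\pi)$ and hence $\text{Mod}(A)\leq 8\pi/\log 2<100$ --- but you should state the threading condition in that corrected form.
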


\begin{proof}
Deonte $B$ as the bounded component of $\mathbb C-A$, and $r=\max\{|z|:z\in B\}$ and 
$R=\min\{|z|:z\in\mathbb (B\cup A)^c\}$. 
If $R\geq2r$ we are done. So we may assume $R<2r$.

Then $D_{2r}\cap\gamma\neq\emptyset$ for all $\gamma\in\Gamma(A)$.
 Let $f$ be a function on $A$ such that $f(z)=1/r$ on $A\cap D_{3r}$ and $f(z)=0$ on $A\backslash D_{3r}$. If $\gamma\in\Gamma$ and 
$
\gamma\subseteq D_{3r}$, 
$$
 \int_\gamma fds= s(\gamma)\cdot\frac{1}{r}
\geq2\cdot\text{diam}(B)\cdot\frac{1}{r}
= 2r\cdot\frac{1}{r}>1.
$$ 
If $\gamma\in\Gamma$ and $\gamma\subsetneq D_{3r}$, then $\gamma$ is a connected curve connecting $D_{2r}$ and $D_{3r}^c$ and
$$
 \int_\gamma fds\geq d(D_{2r}, D_{3r}^c)\cdot\frac{1}{r}=r\cdot\frac{1}{r}=1.
$$
So $f$ is admissible and
$$
\text{Mod}(A)\leq  \int_A f^2=\frac{1}{r^2}\cdot\text{Area}(A\cap D_{3r})\leq\frac{\pi(3r)^2}{r^2}=9\pi<100.
$$
This contradicts with our assumption.
\end{proof}

\begin{rmk}
To some extend, Lemma \ref{modulus} is a consequence of Teichm\"uller's result on extremal annuli (see Theorem 4-7 in \cite{ahlfors2010conformal}). The constant $100$ is chosen for convenience and should not be optimal.
\end{rmk}

\subsection{Discrete Harmonic Functions}
Given $V_0\subset V$ and
$\eta\in\mathbb R^{E(V_0)}_{>0}$, a discrete function $f:V_0\rightarrow\mathbb R$, or equivalently a vector $f\in\mathbb R^{V_0}$, is called \emph{harmonic at} $i\in int(V_0)$ if
$$
\sum_{j:ij\in E}\eta_{ij}(f_j-f_i)=0.
$$
The following result is well-known and easy to prove.
\begin{prop}
\label{discrete laplace equation}
Suppose $V_0$ is a finite subset of $V$ and
$\eta_{ij}\in\mathbb R^{E(V_0)}_{>0}$. 
\begin{enumerate}[label=(\alph*)]

\item If $f\in\mathbb R^{V_0}$ is harmonic at $i$ for all $i\in int(V_0)$, then for all $i\in V_0$
$$
|f_i|\leq\max_{j\in\partial V_0}|f_j|.
$$

\item
Given
$g:\partial V_0 \rightarrow \mathbb R$, there exists a unique function $f:V_0\rightarrow\mathbb R$ such that

\begin{enumerate}[label=(\roman*)]
    \item $f_i=g_i$ on $\partial V_0$, and
    \item $f$ is harmonic at any $i\in int(V_0)$. 
\end{enumerate}
Furthermore, such a map $(\eta,g)\mapsto f$ is smooth from $\mathbb R^{E(V_0)}_{>0}\times\mathbb R^{\partial V_0}$ to $\mathbb R^{V_0}$.
\end{enumerate}
\end{prop}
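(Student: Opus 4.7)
Parts (a) and (b) constitute the standard discrete maximum principle together with its consequence for the Dirichlet problem, so I plan to follow the textbook route.

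For part (a), I would let $M=\max_{i\in V_0} f_i$ and consider the nonempty level set $S=\{i\in V_0: f_i=M\}$. Harmonicity at any $i\in S\cap int(V_0)$ reads $\sum_{j:ij\in E}\eta_{ij}(f_j-M)=0$; since every $\eta_{ij}>0$ and every $f_j\le M$, each summand is nonpositive, forcing $f_j=M$ for each neighbor $j$ of $i$. Thus $S$ is closed under taking $V$-neighbors at every vertex that lies in $int(V_0)$. If $S\cap\partial V_0$ were empty, then starting from any $i_0\in S$ and iteratively adjoining all $V$-neighbors would produce an ever-growing subset of $S\subseteq V_0$. Since the $1$-skeleton $(V,E)$ of the triangulation of $\mathbb R^2$ is connected, this process would eventually exhaust $V$, contradicting the finiteness of $V_0$. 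Hence $\max_{V_0} f\le\max_{\partial V_0} f$, and applying the same argument to $-f$ yields $|f_i|\le\max_{j\in\partial V_0}|f_j|$.

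For the uniqueness statement in (b), any two solutions $f,f'$ have difference that is harmonic on $int(V_0)$ and vanishes on $\partial V_0$, so part (a) forces $f\equiv f'$. For existence, I would view the Dirichlet problem as a square linear system: the map $L_\eta:\mathbb R^{int(V_0)}\to\mathbb R^{int(V_0)}$ sending $h\mapsto\bigl(-\sum_{j:ij\in E}\eta_{ij}(h_j-h_i)\bigr)_{i\in int(V_0)}$ (with $h$ extended by $g$ on $\partial V_0$) is an affine-linear system whose homogeneous part is a linear endomorphism of a finite-dimensional space. The injectivity supplied by the uniqueness argument implies bijectivity, so one can always solve for the interior values given $g$. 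Equivalently, $f$ is the unique minimizer of the strictly convex coercive energy $\tfrac12\sum_{ij\in E(V_0)}\eta_{ij}(f_i-f_j)^2$ over all extensions of $g$ to $V_0$, which provides a second proof of existence.

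Finally, for the smoothness in $(\eta,g)$, the matrix representing the homogeneous part of $L_\eta$ has entries linear in $\eta$, and the right-hand side coming from the boundary data is bilinear in $(\eta,g)$. Since $L_\eta$ is invertible for every $\eta\in\mathbb R^{E(V_0)}_{>0}$, Cramer's rule expresses the interior values of $f$ as a rational function of $(\eta,g)$ with nonvanishing denominator, hence as a smooth (indeed real-analytic) function. I do not anticipate any deep obstacle here; the one place that deserves a little care is the propagation step in (a), where the finiteness of $V_0$ together with the connectedness of the infinite graph $(V,E)$ is used to rule out an ``interior'' maximum that never reaches $\partial V_0$.
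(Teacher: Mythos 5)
Your proof is correct. The paper does not actually supply an argument for this proposition (it is stated as ``well-known and easy to prove''), and what you give is the standard one: the level-set propagation for the maximum principle (where you correctly use that all neighbors of an interior vertex lie in $V_0$, and that connectedness of the infinite $1$-skeleton plus finiteness of $V_0$ forces the maximum set to meet $\partial V_0$), injectivity-implies-surjectivity for the square linear system, and Cramer's rule for smoothness. Nothing further is needed.
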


Given $\eta\in\mathbb R^{E}_{>0}$, $f\in\mathbb R^V$ is called \emph{harmonic} if it is harmonic at all points in $V$.
It is well-known by Liouville's Theorem that any bounded smooth harmonic function on the plane is constant.
Here we have a discrete version of Liouville's Theorem.
\begin{theorem}
\label{bounded harmonic function}
Suppose $\phi$ is a geodesic embedding of $T$ and $l(\phi)$ is 
uniformly nondegenerate. Given $\eta\in\mathbb R^E_{>0}$ with $|\eta|_\infty<\infty$, then any bounded harmonic function on $(T,\eta)$ is constant. 
\end{theorem}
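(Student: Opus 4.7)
The plan is to show the network $(T,\eta)$ is recurrent via an extremal-length/capacity estimate, and then deduce the Liouville property by a martingale argument. Fix a basepoint $v_0\in V$. For each large $R$, let $V_R$ be the set of vertices $v$ with $|\phi(v)-\phi(v_0)|\le R$ (which is finite by local finiteness of the triangulation and the angle lower bound preventing accumulation of vertices in bounded regions), and let $u_R$ be the unique discrete harmonic function on $V_R$ with $u_R(v_0)=1$ and $u_R|_{\partial V_R}=0$ given by Proposition~\ref{discrete laplace equation}. The discrete Dirichlet energy $E(u_R)=\tfrac12\sum\eta_{ij}(u_R(i)-u_R(j))^2$ equals the effective conductance from $v_0$ to $\partial V_R$; recurrence amounts to showing $E(u_R)\to 0$ as $R\to\infty$.

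By the Dirichlet principle it suffices to produce a trial function $\tilde u$ with $\tilde u(v_0)=1$, $\tilde u|_{\partial V_R}=0$, and small discrete energy. A natural competitor comes from the extremal metric $\rho(z)=(|z|\log R)^{-1}$ of the continuous modulus of $A_{1,R}$: set
$$
\tilde u(i):=\min\bigl(1,\max(0,(\log R-\log|\phi(i)-\phi(v_0)|)/\log R)\bigr).
$$
The corresponding planar function has continuous Dirichlet integral exactly $2\pi/\log R$, which is $1/\text{Mod}(A_{1,R})$. Uniform nondegeneracy of $l(\phi)$ forces each triangle of $\phi(T)$ to have bounded aspect ratio; combined with $\eta_{ij}\le|\eta|_\infty$, a triangle-wise comparison of the form $(\tilde u_i-\tilde u_j)^2\le C(\epsilon)\int_\triangle|\nabla\tilde u|^2$ on each edge-adjacent triangle gives $E(u_R)\le E(\tilde u)\le C(\epsilon,|\eta|_\infty)/\log R$, completing the capacity estimate.

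From recurrence, a standard martingale convergence argument finishes the proof: if $f$ is bounded and harmonic, then $f(X_n)$ is a bounded martingale for the random walk $(X_n)$ on $(T,\eta)$ and hence converges almost surely to some limit $L$, but recurrence forces $X_n$ to visit every vertex infinitely often, so $L=f(v)$ for every $v$ and $f$ is constant. The main obstacle I anticipate is rigorously justifying the triangle-wise Dirichlet-energy comparison for the Lipschitz (not smooth) function $\tilde u$ in the second paragraph: because only an angle bound is assumed, triangles may vary wildly in size, and one must verify that the edge-difference bound $(\tilde u_i-\tilde u_j)^2\le C(\epsilon)\int_\triangle|\nabla\tilde u|^2$ holds with a constant depending only on $\epsilon$, with additional care at the cutoff circles $|\phi(i)-\phi(v_0)|=1$ and $=R$ where $\tilde u$ transitions between its constant and radial regimes.
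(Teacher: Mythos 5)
Your high-level strategy is the same as the paper's: reduce the Liouville property to recurrence of the network (the paper invokes He's Lemma \ref{recurrent implies Liouville}, your martingale argument is an acceptable substitute), and prove recurrence by an extremal-length/capacity estimate that exploits the angle lower bound. However, the step you yourself flag as the anticipated obstacle is a genuine gap, and it is not merely a matter of ``additional care.'' The edge-wise comparison $(\tilde u_i-\tilde u_j)^2\le C(\epsilon)\int_{\phi(\triangle)}|\nabla\tilde u|^2$ does \emph{not} follow from the angle bound (equivalently, bounded aspect ratio) alone. Write $\rho=d(0,\phi(\triangle))$ and $D=\diam(\phi(\triangle))$, with $\phi(v_0)=0$. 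For a roughly radial edge of $\triangle$ one has $|\tilde u_i-\tilde u_j|\approx\log(1+D/\rho)/\log R$, while the polar-coordinate bound $\int_{\phi(\triangle)}|z|^{-2}\,dA\le 2\pi\log(1+D/\rho)$ gives $\int_{\phi(\triangle)}|\nabla\tilde u|^2\le 2\pi\log(1+D/\rho)/(\log R)^2$; the ratio you need to bound therefore grows like $\log(1+D/\rho)$, which is unbounded over triangles with $D\gg\rho$, and a single triangle with $\rho\sim 1$ and $D\sim R$ would contribute $\Theta(1)$ to $E(\tilde u)$ and destroy the conclusion $E(\tilde u)\to 0$. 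To close the gap you must show such triangles cannot occur: this is exactly the geometric input in the proof of Lemma \ref{43}, namely that uniform nondegeneracy forces $d(U_{ijk}^c,\phi(\triangle ijk))\ge\delta(\epsilon)\,\diam(\phi(\triangle ijk))$, where $U_{ijk}$ is the union of the open stars of the vertices of $\triangle ijk$; this yields $D\le\rho/\delta$ for every triangle not incident to $v_0$, which is what makes $|\nabla\tilde u|$ comparable at all points of $\phi(\triangle)$ and validates your comparison (with a separate check at the two cutoff circles).

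It is instructive to see how the paper avoids needing any pointwise gradient comparison. Instead of one global logarithmic test function, it decomposes the plane into shells between radii $r_{n-1}$ and $r_n=2Cr_{n-1}$, and bounds the extremal width across each shell using the explicit weight $w_e=l_e/r_{n-1}$, whose admissibility needs only that any crossing path has total length at least $r_{n-1}$ (Lemma \ref{43}(b)) and whose energy is controlled by the area bound $\sum_e l_e^2\le 3\pi r_n^2/\sin^2\epsilon$; the reciprocal theorem (Theorem \ref{reciproal}) then converts this into a uniform lower bound on the extremal length of each shell, and summing over shells gives recurrence. Because each shell has bounded modulus, even an edge of length comparable to $r_n$ contributes only $O(C^2)$ and is absorbed into the constant. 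Your route can be repaired, but the repair requires essentially the same geometric lemma the paper already proves, so as written the proposal is incomplete at its central estimate.
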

The proof of Theorem \ref{bounded harmonic function} 
is postponed to Section \ref{proof of the discrete Liouville theorem}.

\subsection{Differential of the Curvature Map}
The differential of $K_i(u)$ has the following elegant formula, first proposed by Luo \cite{luo2004combinatorial}.
\begin{prop}[Adapted from Theorem 2.1 in \cite{luo2004combinatorial}]
\label{differential of curvature}
Suppose
$T_0=(V_0,E_0,F_0)$ is a 1-ring neighborhood of $i\in V$ and $l\in\mathbb R^{E_0}$. Then $K_i=K_i(u)$ is a smooth function on an open set in $\mathbb R^{V_0}$, and
$$
dK_i=\sum_{j:ij\in E}\eta_{ij}(du_i-du_j).
$$
where
$\eta_{ij}=\eta_{ij}(u)$ is defined to be
\begin{equation}
\label{eta}
\eta_{ij}(u)=\frac{1}{2}\left(\cot\theta^k_{ij}(u)+\cot\theta^{k'}_{ij}(u)\right),
\end{equation}
where $\triangle ijk,\triangle ijk'$ are the two triangles in $F$ containing edge $ij$.
\end{prop}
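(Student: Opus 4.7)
The plan is to reduce the problem to a per-triangle computation. Since $K_i(u) = 2\pi - \sum_{\triangle ijk \ni i} \theta^i_{jk}(u)$, we have $dK_i = -\sum_{\triangle ijk \ni i} d\theta^i_{jk}$, so it suffices to derive a formula for $d\theta^i_{jk}$ inside a single triangle and then sum the contributions with care.

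For the single-triangle computation, I would start from the law of cosines $l_{jk}^2 = l_{ij}^2 + l_{ik}^2 - 2l_{ij}l_{ik}\cos\theta^i_{jk}$ and differentiate implicitly, using the projection identity $l_{jk} = l_{ij}\cos\theta^j_{ik} + l_{ik}\cos\theta^k_{ij}$ together with the area formula $2A = l_{ij}l_{ik}\sin\theta^i_{jk}$ to tidy up. Plugging in $dl_{pq} = \tfrac{1}{2}l_{pq}(du_p + du_q)$, which is immediate from the discrete conformal change rule, and collecting by the variables $du_i, du_j, du_k$ should yield
\begin{equation*}
d\theta^i_{jk} = -\frac{l_{jk}^2}{4A}\,du_i + \frac{1}{2}\cot\theta^k_{ij}\,du_j + \frac{1}{2}\cot\theta^j_{ik}\,du_k.
\end{equation*}
The law of sines then produces the key identity $\cot\theta^k_{ij} + \cot\theta^j_{ik} = l_{jk}^2/(2A)$, which allows the $du_i$-coefficient to be split symmetrically, giving
\begin{equation*}
-d\theta^i_{jk} = \tfrac{1}{2}\cot\theta^k_{ij}(du_i - du_j) + \tfrac{1}{2}\cot\theta^j_{ik}(du_i - du_k).
\end{equation*}

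Summing $-d\theta^i_{jk}$ over all triangles at $i$ and grouping by the edge $ij$ then finishes the job: the two triangles $\triangle ijk$ and $\triangle ijk'$ sharing $ij$ contribute $\tfrac{1}{2}\cot\theta^k_{ij}(du_i - du_j)$ and $\tfrac{1}{2}\cot\theta^{k'}_{ij}(du_i - du_j)$ respectively, which combine to $\eta_{ij}(du_i - du_j)$. Smoothness of $K_i$ on the open set where $u*l$ remains a PL metric is automatic, since the law of cosines presents each angle as a smooth function of edge lengths and the lengths depend smoothly on $u$. There is no serious analytic obstacle here; the one thing that actually needs to be checked is the trigonometric identity $\cot\theta^k_{ij} + \cot\theta^j_{ik} = l_{jk}^2/(2A)$, without which the diagonal $-l_{jk}^2/(4A)$ term would not split cleanly into the two edge contributions, and the bookkeeping in summing the per-triangle formulas so that each edge $ij$ picks up precisely the contributions of its two adjacent triangles.
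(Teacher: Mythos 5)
Your derivation is correct and is essentially the standard proof of Luo's formula; the paper itself gives no proof, simply citing Theorem 2.1 of \cite{luo2004combinatorial}, whose argument is the same per-triangle computation ($\partial\theta^i_{jk}/\partial u_j=\tfrac{1}{2}\cot\theta^k_{ij}$, the identity $\cot\theta^k_{ij}+\cot\theta^j_{ik}=l_{jk}^2/(2A)$, and regrouping by edges). Nothing further is needed.
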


\subsection{Maximum Principles}
We need the following maximum principle.
\begin{lemma}
\label{maximum principle}
Suppose $V_0$ is a finite subset of $V$, and $u*l,u'*l$ are Delaunay PL metrics on $T(V_0)$. If $K_i(u)=K_i(u')=0$ for all $i\in int(V_0)$, then for all $i\in V_0$
$$
|u_i'-u_i|\leq\max_{j\in \partial V_0}|u_j'-u_j|.
$$
\end{lemma}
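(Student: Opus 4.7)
The plan is to linearize the two curvature equations $K_i(u) = K_i(u') = 0$ along the straight line joining $u$ and $u'$ in $\mathbb{R}^{V_0}$, thereby reducing the problem to a discrete Laplace equation in the difference $w := u' - u$ with suitable time-averaged edge weights, and then invoke the maximum principle from Proposition \ref{discrete laplace equation}(a).

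Concretely, set $u^t := u + tw$ for $t \in [0,1]$ and apply Proposition \ref{differential of curvature} to differentiate $K_i(u^t)$ in $t$:
\[
\frac{d}{dt} K_i(u^t) = \sum_{j:\, ij \in E} \eta_{ij}(u^t)\,(w_i - w_j).
\]
Integrating over $[0,1]$ and using $K_i(u) = K_i(u') = 0$ for every $i \in int(V_0)$, I obtain the discrete Laplace identity
\[
\sum_{j:\, ij \in E} \bar\eta_{ij}(w_i - w_j) = 0, \qquad \bar\eta_{ij} := \int_0^1 \eta_{ij}(u^t)\, dt,
\]
at every interior vertex $i$. If the averaged weights satisfy $\bar\eta_{ij} > 0$ for every edge $ij \in E(V_0)$, then $w$ is a discrete harmonic function with positive weights, and Proposition \ref{discrete laplace equation}(a) applied to $w$ immediately yields $|w_i| \leq \max_{j \in \partial V_0} |w_j|$ for every $i \in V_0$, which is the desired bound.

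The main obstacle is establishing $\bar\eta_{ij} > 0$. The Delaunay hypothesis on $u*l$ and $u'*l$ only gives $\eta_{ij}(u^0), \eta_{ij}(u^1) \geq 0$; it does not prevent $\eta_{ij}(u^t)$ from dipping below zero for intermediate $t$. The cleanest resolution would be a convexity statement for the Delaunay locus $\{v \in \mathbb{R}^{V_0} : v*l \text{ is Delaunay}\}$ in $u$-coordinates, which would keep $u^t$ in that locus and force $\eta_{ij}(u^t) \geq 0$ throughout the segment. If convexity of the Delaunay locus is not available directly, a fallback is to exploit structural properties of the triangle-wise cotangent identities (in particular a concavity-type property of $t \mapsto \eta_{ij}(u^t)$) that still force the time average to be nonnegative. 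Strict positivity of $\bar\eta_{ij}$ can then be extracted by a small perturbation putting $u$ and $u'$ into the strict interior of the Delaunay region and passing to the limit. This positivity step is the only nontrivial part of the argument; everything else is formal.
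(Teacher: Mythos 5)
Your reduction to a discrete Laplace equation with averaged weights $\bar\eta_{ij}=\int_0^1\eta_{ij}(u+tw)\,dt$ is formally correct, but the step you defer is not a technicality: it is the entire content of the lemma, and none of the proposed repairs works as stated. The Delaunay hypothesis controls $\eta_{ij}(u^t)$ only at $t=0,1$. Worse, the segment $u^t=u+t(u'-u)$ need not even stay inside the set where $u^t*l$ is a PL metric, so $K_i(u^t)$ and $\eta_{ij}(u^t)$ may be undefined for intermediate $t$: the triangle inequalities for $u*l$ are linear in the variables $x_i=e^{-u_i/2}$ (they read $x_k l_{ij}<x_i l_{jk}+x_j l_{ik}$ and cyclic permutations), and this region is \emph{not} convex in the $u$-coordinates. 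For an equilateral triangle with unit side lengths, $u=(0,4,0)$ and $u=(4,0,0)$ are both admissible, while their midpoint $(2,2,0)$ gives $x=(e^{-1},e^{-1},1)$ and violates $1<2e^{-1}$. So the convexity of the Delaunay locus that your argument leans on is false already at the level of nondegeneracy, and the fallback concavity of $t\mapsto\eta_{ij}(u^t)$ is unsubstantiated. Finally, even granting $\bar\eta_{ij}\ge0$, Proposition~\ref{discrete laplace equation}(a) requires strictly positive weights (a zero weight can disconnect the network and destroy the maximum principle), and the perturbation you propose to restore strictness also destroys the hypothesis $K_i(u)=K_i(u')=0$, so $w$ would only be approximately harmonic and you would need quantitative control of that error.

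The paper avoids the path entirely: it derives the global statement from a local maximum principle on a single $1$-ring neighborhood (Lemma~\ref{local maximum principle}, quoted from Dai--Ge--Ma and Luo--Sun--Wu), which uses the Delaunay property only of the two given metrics $u*l$ and $u'*l$ themselves, with no interpolation. The global lemma then follows by the standard propagation argument: an interior extremum of $u'-u$ forces the same extremum at a neighbor, hence eventually at $\partial V_0$. If you wish to salvage a linearization scheme, you must replace the straight segment by a path along which nondegeneracy and Delaunayness are preserved; producing such a path is essentially what the cited local lemma encapsulates, so as written your argument has a genuine gap at its central step.
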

Lemma \ref{maximum principle} is a standard consequence the following local maximum principle, which is adapted from Lemma 2.12 in \cite{ dai2022rigidity} (or Theorem 3.1 in \cite{luo2020discrete}).
\begin{lemma}
\label{local maximum principle}
Suppose $i\in V$ and $T_0=(V_0,E_0,F_0)$ is the 1-ring neighborhood of $i$ in $V$. 
Given $l\in\mathbb R^{E_0}$, if $u*l,u'*l$ are two Delaunay PL metrics on $T_0$ and $K_i(u)=K_i(u')=0$, 
then
$$
u_i'-u_i\leq \max_{j:ij\in E}(u_j'-u_j)
$$
and the equality holds if 
$(u_i'-u_i)=(u_j'-u_j)$ for any neighbor $j$ of $i$.
\end{lemma}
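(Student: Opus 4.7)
The plan is to apply the fundamental theorem of calculus to $K_i$ along the straight-line path $v(t) := u + t(u' - u)$, $t \in [0,1]$, exploiting the differential formula of Proposition \ref{differential of curvature} and the nonnegativity of the weights $\eta_{ij}$ that the Delaunay condition forces.

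Writing $w := u' - u$, Proposition \ref{differential of curvature} formally gives
\[
0 \;=\; K_i(u') - K_i(u) \;=\; \int_0^1 \sum_{j \,:\, ij \in E} \eta_{ij}(v(t))\,(w_i - w_j)\,dt \;=\; \sum_{j \,:\, ij \in E} \bar\eta_{ij}\,(w_i - w_j),
\]
with $\bar\eta_{ij} := \int_0^1 \eta_{ij}(v(t))\,dt$. Once $\bar\eta_{ij} \geq 0$ is known, this identity rewrites as $\bigl(\sum_j \bar\eta_{ij}\bigr)\, w_i = \sum_j \bar\eta_{ij}\, w_j$, realizing $w_i$ as a nonnegative weighted average of $\{w_j\}_{j : ij \in E}$. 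In particular $w_i \leq \max_{j : ij \in E} w_j$, and equality forces $\bar\eta_{ij}(w_i - w_j) = 0$ for each neighbor $j$, whence $w_j = w_i$ at every $j$ with $\bar\eta_{ij} > 0$, yielding both the inequality and the equality clause of the lemma.

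The main obstacle is the nonnegativity $\bar\eta_{ij} \geq 0$. The Delaunay hypothesis only yields $\eta_{ij}(u), \eta_{ij}(u') \geq 0$ at the two endpoints of the path, while the intermediate $v(t) * l$ need not even be a PL metric, so neither $\eta_{ij}(v(t))$ nor $K_i(v(t))$ is a priori defined. I would handle this by adopting the standard remedy of lifting the problem to the hyperbolic discrete conformal framework of Bobenko--Pinkall--Springborn \cite{bobenko2015discrete}, in which each Euclidean triangle is replaced by a decorated ideal hyperbolic triangle; this extends both $K_i$ and the weight $\eta_{ij}$ to smooth functions on all of $\mathbb R^{V_0}$ in such a way that the integration identity above persists and the Delaunay sign condition can be propagated along the path connecting two Delaunay endpoints. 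Such an argument is carried out in detail in Lemma 2.12 of \cite{dai2022rigidity} and Theorem 3.1 of \cite{luo2020discrete}, which the statement cites, and with that extension in hand the calculation of the previous paragraph completes the proof.
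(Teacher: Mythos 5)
The paper does not actually prove Lemma \ref{local maximum principle}: it imports the statement from Lemma 2.12 of \cite{dai2022rigidity} (equivalently Theorem 3.1 of \cite{luo2020discrete}), the only added content being the remark that the normalization $u_i=u_i'=0$ used there can be removed by scaling invariance. Since your proposal also ends by deferring the decisive step to those same two references, it is, in effect, the same move the paper makes, and your first two paragraphs are a correct account of the mechanism: with $w=u'-u$ and $\bar\eta_{ij}=\int_0^1\eta_{ij}(v(t))\,dt\ge 0$, the identity $0=K_i(u')-K_i(u)=\sum_j\bar\eta_{ij}(w_i-w_j)$ exhibits $w_i$ as a weighted average of the $w_j$ and the conclusion follows (modulo the small point that one must also note $\sum_j\bar\eta_{ij}>0$, which holds because not all $\eta_{ij}$ can vanish at an embedded Delaunay star; and note that the equality clause in the statement is the trivial ``if'' direction, whereas your argument addresses the converse, and only at neighbors with $\bar\eta_{ij}>0$).

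Read as a self-contained proof, however, the proposal stalls exactly where you say it does, and the proposed repair is asserted rather than established. The obstacle is not merely that $\eta_{ij}(v(t))$ may be undefined where the triangle inequality fails; it is that there is no a priori reason for the segment $v(t)$ to remain in the set of $u$ for which $u*l$ is a Delaunay PL metric on the \emph{fixed} complex $T_0$, so even where defined the weights $\eta_{ij}(v(t))$ may go negative at intermediate times. The Bobenko--Pinkall--Springborn/Luo extension does not by itself resolve this: it extends the angle functions only continuously (and the associated action to $C^1$), and it does not make the extended $\eta_{ij}$ nonnegative off the Delaunay locus; in the literature the sign is preserved along deformations by performing diagonal switches, which changes the triangulation and is precisely what a fixed-star local lemma cannot accommodate. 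So the sentence claiming that the extension lets ``the Delaunay sign condition be propagated along the path'' is the entire nontrivial content of the cited results, not a routine consequence of smoothness, and as written your argument is a pointer to the literature rather than a proof. Given that the paper itself treats the lemma the same way, this is acceptable as a matching of approaches, but the gap should be flagged as such rather than presented as a standard remedy.
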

\begin{rmk}
Lemma 2.12 in \cite{ dai2022rigidity} is a special case of our Lemma \ref{local maximum principle}, where $u_i=u_i'=0$ is further assumed. However, by the scaling invariance these two Lemmas are really equivalent.
\end{rmk}
\subsection{Key Estimates on the Conformal Factors for Geodesic Embeddings}

\begin{lemma}
\label{compare}
Suppose $\epsilon>0$ and $\phi,\psi$ are two geodesic embeddings of a subcomplex $T_0=(V_0,E_0,F_0)$ of $T$, such that 

\begin{enumerate}[label=(\roman*)]
\item $l(\psi)=u*l(\phi)$ for some $u\in\mathbb R^{V_0}$, and

\item
the inner angles in both PL metrics $l(\phi)$ and $l(\psi)$ are at most $\pi/2-\epsilon$.
\end{enumerate}
Given $r,r'>0$ and $i\in V$, if 
$$
\phi(|T_0|)\subseteq D_{r}
$$ 
and 
$$
\psi(i)\in D_{r'/2}\subseteq D_{r'}\subseteq \psi(|T_0|),
$$ 
then 
$$
u_i\geq\log(r'/r)- M
$$
for some constant $M=M(\epsilon)>0$. 
\end{lemma}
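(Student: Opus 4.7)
Plan: The lemma provides a quantitative lower bound on the discrete conformal factor $u_i$. Intuitively, since $\phi$ squeezes $|T_0|$ into a disk of radius $r$ while $\psi$ spreads it out to cover a disk of radius $r'$ near $\psi(i)$, the local scaling at $i$ must be at least $r'/r$ up to a multiplicative constant depending only on $\epsilon$. The plan is to combine a discrete Lipschitz bound for $u$ with a quasiconformal modulus argument leveraging Lemma \ref{modulus}.

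First I would show that $|u_j - u_k|\leq M_0(\epsilon)$ for every edge $jk\in E_0$. In a triangle with angles in $[2\epsilon,\pi/2-\epsilon]$, the ratio of any two side lengths is bounded by some $C_1(\epsilon)$. For any edge $jk$, pick a triangle $\triangle ijk$ containing it; the conformal change rule gives the identity
\[
e^{(u_j-u_k)/2} \;=\; \frac{l(\psi)_{ik}/l(\psi)_{jk}}{l(\phi)_{ik}/l(\phi)_{jk}},
\]
and the ratio bound on both metrics yields $|u_j-u_k|\leq 4\log C_1$. Combined with the valence bound $\leq\pi/\epsilon$ (from $\sum\theta=2\pi$), this also implies that $\phi(R_i)$ is ``fat'' at $\phi(i)$: it contains a disk of radius $c(\epsilon)\delta_\phi$, where $\delta_\phi := \max_{j\sim i}|\phi(j)-\phi(i)|$, and similarly for $\psi(R_i)$ at $\psi(i)$. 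Moreover, the piecewise affine homeomorphism $f := \psi\circ\phi^{-1}:\phi(|T_0|)\to\psi(|T_0|)$ is $K(\epsilon)$-quasiconformal, since on each triangle it is the affine map between two triangles with angles in $[2\epsilon,\pi/2-\epsilon]$, whose dilatation depends only on $\epsilon$.

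The proof then splits into two cases. If some neighbor $j$ of $i$ satisfies $|\psi(j)-\psi(i)|\geq r'/4$, then since $|\phi(j)-\phi(i)|\leq 2r$ we get $e^{(u_i+u_j)/2}\geq r'/(8r)$, and the Lipschitz bound directly yields $u_i\geq\log(r'/r)-M$. Otherwise $\psi(R_i)\subset D_{r'/4}(\psi(i))\subset D_{r'/2}(\psi(i))\subset\psi(|T_0|)$, and I would consider, for $N$ to be chosen large, the annulus $A^\psi_N := A_{(r'/2)e^{-N},\,r'/2}(\psi(i))\subset\psi(|T_0|)$ of modulus $N/(2\pi)$, pull it back to an annulus $A^\phi_N := f^{-1}(A^\psi_N) \subset \phi(|T_0|)\setminus\overline{\phi(R_i)}$ of modulus $\geq N/(2\pi K)$ separating $\phi(i)$ from $\partial\phi(|T_0|)$, and apply Lemma \ref{modulus} (taking $N\geq 200\pi K(\epsilon)$) to extract a round annulus $A_{s,2s}(\phi(i))\subset A^\phi_N$. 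The containment $\phi(|T_0|)\subset D_r$ and the fatness of $\phi(R_i)$ at $\phi(i)$ should then pin down the scale ratio $\delta_\psi/\delta_\phi\geq c(\epsilon)\,r'/r$ (where $\delta_\psi := \max_{j\sim i}|\psi(j)-\psi(i)|$), and the Lipschitz bound completes the argument.

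The main obstacle I anticipate is the final modulus extraction. A naive quasiconformal comparison gives only the exponent-$K$ bound $\delta_\psi\gtrsim r'(\delta_\phi/r)^K$, which is insufficient when $K(\epsilon)>1$. To obtain the sharp linear relation $\delta_\psi/\delta_\phi\gtrsim r'/r$, one must exploit the round-annulus structure from Lemma \ref{modulus} carefully: the extracted annulus $A_{s,2s}(\phi(i))\subset A^\phi_N \subset D_r$ forces $2s\leq r$, while its image $f(A_{s,2s})\subset A_{(r'/2)e^{-N},\,r'/2}(\psi(i))$ has outer radius $r'/2$, and comparing these constraints (possibly iterating across a dyadic family of annuli or supplementing with finer quasiconformal distortion results of Gehring/Mori type) yields the desired linearized ratio. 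This careful linearization is the heart of the argument.
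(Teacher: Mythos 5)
Your first step (the edge-Lipschitz bound $|u_j-u_k|\leq M_0(\epsilon)$ via side-length ratios in uniformly acute triangles) is exactly the paper's opening move, and your case split is harmless. The problem is the second case: the quasiconformal modulus argument cannot be completed, and the obstacle you flag at the end is not a technicality but a genuine obstruction. A $K$-quasiconformal map really can have pointwise scale factor as small as a $K$-th power of the global ratio --- the model map $z\mapsto z|z|^{K-1}$ shows that the local expansion at a point mapped well inside the image can be far smaller than $r'/r$ while every modulus constraint you list is satisfied. Mori/Gehring-type distortion theorems give exactly this H\"older exponent $1/K$ (resp.\ $K$) and are sharp, so they cannot ``linearize'' the estimate, and iterating dyadic annuli only compounds the power loss. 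What your extracted round annulus $A_{s,2s}$ about $\phi(i)$ actually yields is that $f$ expands the disk of radius $s\leq r$ about $\phi(i)$ to cover a disk of definite radius about $\psi(i)$ --- an \emph{averaged} expansion over a region containing unboundedly many vertices --- and your edge-Lipschitz bound on $u$ cannot transfer this average to the single vertex $i$, because the combinatorial distance from $i$ to the outer boundary of that disk is unbounded.

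The paper closes this gap by an entirely different mechanism, a discrete Schwarz--Pick argument: after rescaling so that $r'=1$, it introduces the hyperbolic conformal factor $u^h_i=u_i+\log\bigl((1-|\phi(i)|^2)/(1-|\psi(i)|^2)\bigr)$ via Lemma \ref{lemma51}, uses the edge-Lipschitz bound only to check that $u^h\geq 0$ on the boundary of $V_1=\{i:\psi(i)\in D\}$, and then invokes the hyperbolic maximum principle (Lemma \ref{hyperbolic maximum}) to propagate $u^h\geq 0$ to the given interior vertex, whence $u_i\geq \log(1-|\psi(i)|^2)\geq -1$. That maximum principle is the discrete-conformal input that quasiconformality alone cannot supply; without it, or some substitute Harnack-type control on $u$ across arbitrarily many combinatorial layers, your plan does not close.
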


\section{Proof of Theorem \ref{main}}
Assume 
$l(\psi)=\bar u*l(\phi)$, and all the inner angles in $l(\phi),l(\psi)$ are at most $\pi/2-\epsilon$ for a constant $\epsilon>0$. We will first prove Theorem \ref{main} assuming $\bar u:V\rightarrow\mathbb R$ is bounded in Section \ref{u is constant}, and then prove $\bar u$ is bounded in Section \ref{bounded of u}.

\subsection{Proof of Theorem \ref{main} Assuming the Boundedness of $\bar u$}
\label{u is constant}
Let us prove by contradiction and assume that $\bar u$ is not constant. Without loss of generality, 
we can do a scaling and assume 
$$
\inf_{i\in V}\bar u_i<0<\sup_{i\in V}\bar u_i
$$
and 
$$
-\inf_{i\in V}\bar u_i
=\sup_{i\in V}\bar u_i
=|\bar u|_\infty.
$$

By a standard compactness argument, it is not difficult to see that there exists a small constant $\delta=\delta(\epsilon,\bar u)\in(0,|\bar u|_\infty)$ such that if $|u|_\infty<2\delta$, 
$$
\theta^i_{jk}(u)=
\theta^i_{jk}(u,l(\phi))\geq\pi/2-\epsilon/2
$$ 
for all $\triangle ijk\in F$.
Pick a sequence of increasing subsets $V_n$ of $V$ such that  $\cup_{n=1}^\infty V_n=V$.
For each $n\in\mathbb Z_{>0}$, we will construct a smooth $\mathbb R^{V_n}$-valued function $u^{(n)}(t)=[u_i^{(n)}(t)]_{i\in V_n}$ on $(-2\delta,2\delta)$ such that
\begin{enumerate}[label=(\alph*)]


    \item $u^{(n)}(0)=0$, and

    \item  $\dot u_i^{(n)}(t)=\bar u_i/|\bar u|_\infty$ if $i\in \partial V_n$, and
    
    \item  if $i\in \text{int}(V_n)$ then
    \begin{equation}
    \label{condition c}
    \sum_{j:ij\in E}\eta_{ij}(u^{(n)}(t))
(\dot u_i^{(n)}(t)
-\dot u_j^{(n)}(t))
=0
\end{equation}
where $\eta_{ij}(u)$ is defined for all $ij\in E(V_n)$ as in equation (\ref{eta}).
\end{enumerate}
The conditions (b) and (c) give an autonomous ODE system on 
$$
\mathcal U_n=\{u\in\mathbb R^{V_n}:|u|_\infty<2\delta\}.
$$
Notice that $\eta_{ij}(u)>0$ if $u\in\mathcal U_n$. 
Then by part (b) of Lemma \ref{discrete laplace equation},
$\dot u^{(n)}(t)$ is smoothly determined by $u^{(n)}(t)$ on $\mathcal U_n$. Given the initial condition $u^{(n)}(0)=0$,
assume the maximum existence interval for this ODE system on $\mathcal U_n$ is $(t_{\min},t_{\max})$ where
$t_{\min}\in[-\infty,0)$
and $t_{\max}\in(0,\infty]$.
By the maximum principle (part (a) in Lemma \ref{discrete laplace equation}), for all $i\in V_n$
$$
|\dot u^{(n)}|_\infty\leq \max_{j\in\partial V_n}|\dot u_j^{(n)}|=
\max_{j\in\partial V_n}|\bar u_j|/|\bar u|_\infty\leq 1.
$$
So $|u^{(n)}(t)|_\infty\leq t\leq t_{\max}$ for all $t\in [0,t_{\max})$. By the maximality of $t_{\max}$, $t_{\max}=\infty$ or
$$
|u^{(n)}(t)|_\infty\rightarrow2\delta
\quad\text{ as }\quad t\rightarrow t_{\max}.
$$
So $t_{\max}\geq2\delta$ and by a similar reason $t_{\min}\leq-2\delta$. $u^{(n)}(t)$ is indeed well-defined on $(-2\delta,2\delta)$.
By Proposition \ref{differential of curvature} and equation (\ref{condition c}),
$
K_i(u^{(n)}(t))=0
$
for all $i\in  int(V_n)$.
Then by Lemma \ref{maximum principle}, for all $i\in V_n$
\begin{align}
\label{eqn32}
&|\bar u_i-u_i^{(n)}(\delta)|
\leq \max_{j\in\partial V_n}
|\bar u_j-u_j^{(n)}(\delta)|=
\max_{j\in\partial V_n}
\left(
\bar u_j-\delta\cdot
\frac{\bar u_j}{|\bar u|_\infty}
\right)
\\
\leq&(1-\frac{\delta}{|\bar u|_\infty})|\bar u|_\infty=|\bar u|_\infty-\delta.\notag
\end{align}

By picking a subsequence, we may assume that $u^{(n)}_i$ converge to $u_i^*$ on $[0,\delta]$ uniformly for all $i\in V$. 
Then $u^*=[u_i^{*}]_{i\in V}$ satisfies the following.

(a) $u^*_i(t)$ is 1-Lipschitz for all $i\in V$. As a consequence, for all $i\in V$,
$u_i^*(t)$ is differentiable at a.e. $t\in[0,\delta]$.

(b) For all $\triangle ijk\in F$,
$\theta^i_{jk}(u^*(t))\leq\frac{\pi}{2}-\frac{\epsilon}{2}$. As a consequence $\theta^i_{jk}(u^*(t))\geq\epsilon$ for all $\triangle ijk\in F$
and $\eta_{ij}(u^*(t))\leq2\cot\epsilon$ for all $ij\in E$.

(c) For all $i\in V$, $K_i(u^*(t))=0$. As a consequence for a.e. $t\in[0,\delta]$, 
$$
0=\frac{d}{dt}K_i(u^*(t))=\sum_{j:ij\in E}\eta_{ij}(u^*(t))(\dot u^*_i(t)-\dot u^*_j(t)),
$$
for all $i\in V$.

(d) By Theorem \ref{bounded harmonic function}, $\dot u^*(t)$ is constant on $V$ for a.e. $t\in[0,\delta]$. As a consequence $u_i^*(\delta)$ equals to a constant $c$ independent on $i\in V$.

(f) By equation (\ref{eqn32}),
$$
|\bar u_i -c|=|\bar u_i-u^*_i(\delta)|\leq|\bar u|_\infty-\delta
$$
for all $i\in V$. As a consequence we get the following contradiction
$$
2|\bar u|_\infty=|\sup_{i\in V}\bar u_i-\inf_{i\in V}\bar u_i|
\leq |\sup_{i\in V}\bar u_i-c|+
|\inf_{i\in V}\bar u_i-c|\leq2|\bar u|_\infty-2\delta.
$$
\subsection{Boundedness of the Conformal Factor}
\label{bounded of u}
Without loss of generality, we may assume that $\psi\circ\phi^{-1}$ is linear on each triangle $\phi(\triangle ijk)$. Then $\psi\circ\phi^{-1}$ is $K$-quasiconformal for some constant $K=K(\epsilon)>0$.
We will prove the boundedness of $\bar u$ by showing that for any $ j, j'\in V$,
$$
|\bar u_j-\bar u_{j'}|\leq 2M+2\log C+\log C'-\log2,
$$
where $M=M(\epsilon)$ is the constant given in Lemma \ref{compare} and
$C=C(\epsilon)$ is the constant given in Lemma \ref{43} and
$C'=C'(\epsilon)=e^{200\pi K}$.

Assume $j,j'\in V$. For convenience, let us assume $\phi(j)=\psi(j)=0$ by translations.
Pick $r>0$ sufficiently large such that $|\phi(j')|<r/(2C)$ and $\phi(R_j)\subseteq D_{r}$. Let 
$V_1=\{i\in V:\phi(i)\in D_{r}\}$ and 
$V_2=\{i\in V:\phi(i)\in D_{CC'r}\}$ and $T_1=T(V_1)$ and $T_2=T(V_2)$.
Then by Lemma \ref{43} we have
    \begin{equation}
    \label{31}
        \{\phi(j),\phi(j')\}\subseteq D_{r/(2C)}\subseteq D_{r/C}\subseteq\phi(|T_1|),
    \end{equation}
and
    \begin{equation*}
        \phi(|T_1|)\subseteq D_{r}
\subseteq D_{C'r}\subseteq\phi(|T_2|)
    \end{equation*}
and
    \begin{equation}
    \label{32}
        \phi(|T_2|)\subseteq D_{CC'r}.
    \end{equation}
So $A=A_{r,C'r}$ separates $\phi(|T_1|)$ and $\phi(|T_2|)^c$, and then $A'=\psi\circ\phi^{-1}(A)\ni\psi(j)=0$ separates 
$\psi(T_1)$ and $\psi(T_2)^c$.
Furthermore
$$
\text{Mod}(A')\geq\frac{1}{K}\cdot\text{Mod}(A)=\frac{1}{K}\cdot\frac{1}{2\pi}\log\frac{C'r}{r}=100.
$$
Then by Lemma \ref{modulus} there exists $r'>0$ such that
$A_{r',2r'}\subseteq A'$. So $A_{r',2r'}$ separates $\psi(T_1)$ and $\psi(T_2)^c$ and then
\begin{equation}
\label{33}
\psi(|T_1|)\subseteq D_{r'}
\end{equation}
and
\begin{equation}
\label{34}
\{\psi(j),\psi(j')\}
\subseteq D_{r'}\subseteq D_{2r'}
\subseteq\psi(|T_2|).
\end{equation}
By Lemma \ref{compare} and equations (\ref{32}) and (\ref{34}), both $\bar u_j,\bar u_{j'}$ are at least
$$
\log\frac{2r'}{CC'r}-M
=\log\frac{r'}{r}+\log\frac{2}{CC'}-M.
$$
Again by Lemma \ref{compare} and equations (\ref{33}) and (\ref{31}),
both $-\bar u_j$ and $-\bar u_{j'}$ are at least
$$
\log\frac{r/C}{r'}-M=\log\frac{r}{r'}-\log C-M.
$$
So both $\bar u_j$ and $\bar u_{j'}$ are in the interval
$$
[
\log\frac{r'}{r}+\log\frac{2}{CC'}-M
,\log\frac{r'}{r}+\log C+M
],
$$
and
$
| \bar u_j- \bar u_{j'}|
$
is bounded by the length of this interval
$$
2M+\log C-\log\frac{2}{CC'}
=2M+2\log C+\log C'-\log 2.
$$

\section{Discrete Extremal Length and the Discrete Liouville Theorem}
\label{proof of the discrete Liouville theorem}

\subsection{Electrical Networks and Discrete Extremal Length}

Discrete harmonic functions are closely related to the theory of electrical networks.
Here the 1-skeleton $(V,E)$ of the triangulation $T$ could be viewed as an electrical network, and $\eta_{ij}$ denotes the conductance of the edge $ij$, and the function $f$ denotes the electric potentials at the vertices. Then $f$ is harmonic at $i$ if and only if the outward electric flux at $i$ is $0$.
The theory of electrical networks is closely related to discrete (edge) extremal length, originally introduced by Duffin \cite{duffin1962extremal}. Here we briefly review the theory of discrete (edge) extremal length, adapted to our setting. All the definitions and properties here are well-known and one may read \cite{duffin1962extremal}\cite{he1999rigidity} for references.

Assume $V_1,V_2$ are two nonempty disjoint subsets of $V$ such that $V_0=(V_1\cup V_2)^c$ is finite.
A \emph{path} $p$ between $V_1$ and $V_2$ is a finite set of edges in 
$$
E_0=E_0(V_1,V_2)=\{ij\in E:i\in V_0\text{ or }j\in V_0\}
$$ 
such that $\gamma_p=\cup\{e:e\in p\}$ is a simple curve connecting $V_1$ and $V_2$. Denote $P=P(V_1,V_2)$ as the set of paths between $V_1$ and $V_2$.
A \emph{cut} $q$ between $V_1$ and $V_2$ is a finite set of edges in 
$
E_0
$ 
such that $q$ separates $V_1$ and $V_2$, i.e., for any path $p\in P$, $p\cap q\neq\emptyset$.
Denote $Q=Q(V_1,V_2)$ as the set of cuts between $V_1$ and $V_2$.

Given $\mu\in\mathbb R^{E}_{>0}$,
the \emph{discrete (edge) extremal length} $EL=EL(V_1,V_2,\mu)$ is defined as
\begin{equation}
\label{EL}
EL=\min\{\sum_{e\in E_0}\mu_ew_e^2:w\in\mathbb R^{E_0},\sum_{e\in q}w_e\geq1\text{ for all } q\in Q\},
\end{equation}
and
the \emph{discrete (edge) extremal width} $EW=EW(V_1,V_2,\mu)$ is defined as
$$
EW=\min\{\sum_{e\in E_0}\mu_ew_e^2:
w\in\mathbb R^{E_0},
\sum_{e\in p}\mu_ew_e\geq1\text{ for all } p\in P\}.
$$
Here $\mu_{e}$ should be viewed as the resistance of edge $e\in E$. Then the conductance of edge $e\in E$ should be $\eta_e=1/\mu_e$.
If $f:V\rightarrow\mathbb R$ is harmonic on $V_0$ with respect to $\eta$ and $f|_{V_1}=0$ and $f|_{V_2}=1$, then $w_{ij}=|f_j-f_i|/\mu_{ij}$ gives the unique minimizer in the quadratic minimization problem in equation (\ref{EL}). If we view such $f$ as an electric potential, then $w_e$ represents the current on edge $e\in E_0$ and $EL=\sum_{e\in E_0}\mu_e w_e^2$ is the electrical power in the network, which is equal to the \emph{(equivalent) resistance} between $V_1$ and $V_2$.
The discrete extremal length and width satisfy the following reciprocal theorem.
\begin{theorem}[Adapted from Corollary 1 in \cite{duffin1962extremal}]
\label{reciproal}
$EL(V_1,V_2,\mu)\cdot EW(V_1,V_2,\mu)=1$.
\end{theorem}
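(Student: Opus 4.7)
\emph{Plan.} My strategy is to prove the two inequalities $EL\cdot EW \le 1$ and $EL\cdot EW \ge 1$ separately. As a preliminary reduction, replacing $w_e$ by $|w_e|$ coordinatewise preserves the objective $\sum_e \mu_e w_e^2$ and only strengthens each cut constraint (since $\sum_{e\in q}|w_e|\ge\sum_{e\in q}w_e\ge 1$); the analogous reduction works for $w'$ and the path constraint. So I may restrict both minima to non-negative competitors.

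For the upper bound $EL\cdot EW \le 1$, I would exhibit explicit admissible weights built from a harmonic potential. By Proposition \ref{discrete laplace equation}(b), with conductances $\eta_e := 1/\mu_e$ and boundary data $0$ on $V_1$ and $1$ on $V_2$, there exists a unique function $f$ on $V_1\cup V_0\cup V_2$ extending these values and harmonic at each $v\in V_0$. Write $D := \sum_{e\in E_0}(f_j-f_i)^2/\mu_e$ for its Dirichlet energy. Taking $w'_{ij} := |f_j-f_i|/\mu_{ij}$, any $p\in P$ satisfies $\sum_{e\in p}\mu_e w'_e = \sum_{e\in p}|f_j-f_i|\ge 1$ by the triangle inequality applied to the telescoping sum, so $w'$ is $EW$-admissible with energy $D$. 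Taking $w_e := w'_e/D$, Kirchhoff's current law (summation by parts using harmonicity of $f$) shows the signed current through any cut $q\in Q$ equals $D$, whence $\sum_{e\in q}w_e \ge D/D = 1$; thus $w$ is $EL$-admissible with energy $1/D$. Combining, $EL\cdot EW\le(1/D)\cdot D=1$.

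For the lower bound, I would use a Cauchy--Schwarz coupling. Given non-negative admissible $w$ and $w'$, define the shortest-path potential
$$
h(v) := \inf\Bigl\{\sum_{e\in p}\mu_e w'_e : p \text{ is a path in } E_0 \text{ from } V_1 \text{ to } v\Bigr\}.
$$
Then $h|_{V_1}=0$, $h|_{V_2}\ge 1$ by admissibility of $w'$, and $|h_i-h_j|\le \mu_{ij}w'_{ij}$ for every edge $ij\in E_0$ by the one-step extension of paths. For each $\alpha\in(0,1)$ the set $q_\alpha := \{ij\in E_0 : \min(h_i,h_j)\le\alpha<\max(h_i,h_j)\}$ is a cut in $Q$, because $h$ jumps across $\alpha$ along any path from $V_1$ to $V_2$. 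Admissibility of $w$ gives $\sum_{e\in q_\alpha}w_e\ge 1$, and integrating over $\alpha$ yields
$$
1 \le \int_0^1 \sum_{e\in q_\alpha} w_e\,d\alpha \le \sum_{e\in E_0} w_e\,|h_j-h_i| \le \sum_{e\in E_0}\mu_e w_e w'_e.
$$
Cauchy--Schwarz then gives $1\le\bigl(\sum_e\mu_e w_e w'_e\bigr)^2 \le \bigl(\sum_e\mu_e w_e^2\bigr)\bigl(\sum_e\mu_e (w'_e)^2\bigr)$, and minimizing over $w,w'$ produces $EL\cdot EW\ge 1$.

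The main technical point is verifying that $q_\alpha$ genuinely separates $V_1$ and $V_2$, which reduces to the elementary path-connectedness observation above; because $h$ takes only finitely many values on the support of $E_0$ no measure-theoretic subtlety arises. The only other nontrivial ingredient is the Kirchhoff identity in the upper bound, which follows by summing the harmonicity relation at the vertices on one side of the cut.
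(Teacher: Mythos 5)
Your proposal is correct. Note, however, that the paper does not prove this statement at all: it is quoted as an adaptation of Corollary~1 of Duffin's paper, so there is no internal proof to compare against. What you have written is essentially the classical Duffin reciprocity argument, self-contained: the upper bound $EL\cdot EW\le 1$ by exhibiting the harmonic potential $f$ (which the paper itself hints at when it remarks that $w_{ij}=|f_j-f_i|/\mu_{ij}$ is the extremal competitor), and the lower bound by the level-set/co-area coupling plus Cauchy--Schwarz. Three small points deserve a sentence each if this were written out in full. First, in the upper bound, a cut $q$ in the paper's sense is merely a hitting set for all paths, so ``the signed current through $q$'' is not literally defined; one should pass to the set $S$ of vertices reachable from $V_1$ in $(V,E_0\setminus q)$, note that the $E_0$-edge boundary of $S$ is contained in $q$, apply Kirchhoff to that edge boundary (all of whose interior vertices lie in $V_0$, where $f$ is harmonic), and then use nonnegativity of $w$ to pass from the sub-cut back to $q$. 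Second, in the lower bound, the paper's paths are required to be \emph{simple} curves, so the one-step Lipschitz estimate $h_j\le h_i+\mu_{ij}w'_{ij}$ needs the observation that appending an edge to a simple path and then deleting any resulting loop only decreases the (nonnegative) weight; one should also truncate $h$ at $1$ to handle vertices not reachable from $V_1$ inside $E_0$. Third, the identity degenerates when $P=\emptyset$ (then $EL=+\infty$ and $EW=0$); this case is implicitly excluded both in Duffin's setting and in the paper's applications, where $V_1$ and $V_2$ are always joined by paths through the finite set $V_0$. None of these affects the validity of your argument.
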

Now assume $\emptyset\neq V_0, V_1, V_2...$ is an increase sequence of subsets of $V$ and $\cup_{k=0}^\infty V_k=V$. Then the electric network $(T,\mu)$ is called \emph{recurrent} if
$$
EL(V_0, V_n^c,\mu)\rightarrow\infty
$$ 
as $n\rightarrow\infty$. The recurrency of the network does not depend on the choice of $V_n$'s. 
Intuitively the recurrency means that the equivalent resistance between a finite set and the infinity is infinite.
Discrete extremal length is a useful tool to prove the discrete Liouville theorem, since the recurrency implies the discrete Liouville property.
\begin{lemma}[Lemma 5.5 in \cite{he1999rigidity}]
\label{recurrent implies Liouville}
Assume $(T,\mu)$ is recurrent, and let $\eta_e=1/\mu_e$ for all $e\in E$. Then any bounded harmonic function on $(T,\eta)$ is bounded.
\end{lemma}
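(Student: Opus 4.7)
The plan is a contradiction argument via capacitor functions whose Dirichlet energy vanishes under the recurrence hypothesis, combined with the discrete maximum principle. Suppose $f:V\to\mathbb R$ is bounded harmonic on $(T,\eta)$ with $|f|\le M$, and assume for contradiction that $f$ is not constant.

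Fix any $v_1\in V$ and any finite exhaustion $V_1\subset V_2\subset\cdots$ of $V$ with $v_1\in V_1$. For each $n$, let $\phi_n:V\to[0,1]$ be the unique function satisfying $\phi_n(v_1)=0$, $\phi_n\equiv 1$ on $V_n^c$, and harmonic at every vertex of $V_n\setminus\{v_1\}$; this is a standard finite discrete Dirichlet problem obtained by a variant of Proposition \ref{discrete laplace equation}(b) on a finite set engulfing $V_n$, and $\phi_n\in[0,1]$ by the discrete maximum principle (Proposition \ref{discrete laplace equation}(a)). The electrical-network interpretation recorded right after the definition of $EL$ identifies the Dirichlet energy
\[
\mathcal E(\phi_n)\;:=\;\sum_{\{i,j\}\in E}\eta_{ij}\bigl(\phi_n(i)-\phi_n(j)\bigr)^2
\]
with $1/EL(\{v_1\},V_n^c,\mu)$, so the recurrence hypothesis forces $\mathcal E(\phi_n)\to 0$ as $n\to\infty$.

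Pointwise convergence $\phi_n(v)\to 0$ at any fixed $v\in V$ follows by choosing a path $v_1=w_0,w_1,\dots,w_k=v$ in the $1$-skeleton (connected because $T$ triangulates the plane) and using the edge-by-edge bound $\eta_{w_iw_{i+1}}(\phi_n(w_i)-\phi_n(w_{i+1}))^2\le\mathcal E(\phi_n)$, which when summed along the path and combined with $\phi_n(v_1)=0$ gives $|\phi_n(v)|\to 0$. Now set $g_n:=(f-f(v_1))-2M\phi_n$. Since $f$ and $\phi_n$ are both harmonic on $V_n\setminus\{v_1\}$, so is $g_n$; moreover $g_n(v_1)=0$ and $g_n\le 2M-2M=0$ on $V_n^c$ because $|f-f(v_1)|\le 2M$. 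The directional discrete maximum principle (a consequence of Proposition \ref{discrete laplace equation}(a) applied to $g_n-(\sup_\partial g_n+\inf_\partial g_n)/2$) therefore yields $g_n\le 0$ throughout $V_n$, that is,
\[
f(v)-f(v_1)\;\le\;2M\phi_n(v)\qquad\text{for every }v\in V_n.
\]
Letting $n\to\infty$ gives $f(v)\le f(v_1)$ for every $v\in V$. Since $v_1$ was arbitrary, interchanging the roles of $v$ and $v_1$ produces the reverse inequality, so $f$ is constant, contradicting the assumption.

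The main step to get right is the passage from the energy bound $\mathcal E(\phi_n)\to 0$ to pointwise smallness of $\phi_n$; the path-sum estimate above handles it elementarily, requiring only strict positivity of each individual conductance and connectedness of the $1$-skeleton. In particular, no uniform bound (from above or below) on $\eta$ is needed, which matches the absence of such an assumption in the statement of the lemma.
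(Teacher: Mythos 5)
The paper offers no proof of this lemma at all---it is imported directly from Lemma 5.5 of \cite{he1999rigidity} (note also that the statement as reproduced contains a typo: the conclusion should read ``is \emph{constant}'', which is indeed what you prove)---so there is no in-paper argument to compare against. Your proposal supplies the standard capacitor-function proof, which is essentially the classical argument He uses, and it is correct: recurrence kills the Dirichlet energy of the equilibrium potentials $\phi_n$, a telescoping estimate along a fixed path upgrades energy decay to pointwise decay $\phi_n(v)\to0$, and the one-sided maximum principle comparison $f-f(v_1)\le 2M\phi_n$ on $V_n$ closes the argument; the extension of Proposition \ref{discrete laplace equation} to the Dirichlet problem with the single interior vertex $v_1$ also pinned is routine. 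The one step to tighten is your appeal to ``the electrical-network interpretation recorded right after the definition of $EL$'' for the identity $\mathcal E(\phi_n)=1/EL(\{v_1\},V_n^c,\mu)$. Read literally, that remark claims $w_{ij}=|f_j-f_i|/\mu_{ij}$ minimizes the $EL$ problem for the unit-potential harmonic $f$, which would give $EL=\sum_e\mu_e w_e^2=\mathcal E(f)$, i.e.\ the \emph{reciprocal} of what you use (and as stated that $w$ need not satisfy the cut constraints; the remark is off by the normalization of the current to unit total flux). What is true, and what you actually need, is that $w_{ij}=|\phi_n(i)-\phi_n(j)|/\mu_{ij}$ is admissible and optimal for the \emph{width} problem, giving $\mathcal E(\phi_n)=EW(\{v_1\},V_n^c,\mu)$, whence Theorem \ref{reciproal} and the exhaustion-independence of recurrence yield $\mathcal E(\phi_n)=1/EL(\{v_1\},V_n^c,\mu)\to0$. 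With that bookkeeping corrected the proof stands, and your closing observation is also right: only edgewise positivity of $\eta$ and connectivity of the $1$-skeleton are used, not any uniform bound on $\eta$.
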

\subsection{Proof of the Discrete Liouville Theorem}
We need the following lemma for the proof.
\begin{lemma}
\label{43}
Suppose $\phi:|T|\rightarrow\mathbb R^2$ is a geodesic homeomorphism and any inner angle in $l(\phi)$ is at least $\epsilon>0$. Let $a\in V$ be a vertex and assume $\phi(a)=0$. Given $r>0$, denote $V_r=\{i\in V:|\phi(i)|< r\}$ and $T_r=T(V_r)$. Then there exists a constant $C=C(\epsilon)>0$ such that if
$\phi(R_a)\subseteq D_r$,
\begin{enumerate}[label=(\alph*)]
    \item 
    $
D_{r/C}\subseteq\phi(|T_r|),
$
and 
\item as a consequence $|\phi(i)|\geq r/C$ for all $i\in\partial V_r$.
\end{enumerate}
\end{lemma}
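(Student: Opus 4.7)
I argue by contradiction. Suppose $z_0 \in D_{r/C} \setminus \phi(|T_r|)$ for some $C = C(\epsilon)$ to be chosen large. The hypothesis $\phi(R_a) \subseteq D_r$ puts $a$ and all its neighbors into $V_r$, so $0 = \phi(a)$ lies in the topological interior of $\phi(R_a) \subseteq \phi(|T_r|)$. The segment $\gamma(t) = t z_0$ then first exits $\phi(|T_r|)$ at some $t_0 \in (0,1)$, at a point $p = \gamma(t_0)$ lying on a boundary edge $\phi(ij)$: $i, j \in V_r$, but the adjacent triangle $\triangle i j k_2$ on the opposite side has $k_2 \notin V_r$, i.e.\ $|\phi(k_2)| \ge r$. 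Note $|p| < r/C$.

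Let $M_v, m_v$ denote the longest and shortest edge lengths incident to a vertex $v$. From the angle bound $\theta \in [\epsilon, \pi - 2\epsilon]$ follow two comparabilities: inside any triangle, all edges are comparable by a factor $C_\epsilon = \sin(\pi - 2\epsilon)/\sin\epsilon$; and since any vertex has degree at most $2\pi/\epsilon$, chaining along consecutive triangles at a vertex gives $M_v \le C'_\epsilon m_v$ for some $C'_\epsilon = C'_\epsilon(\epsilon)$. From $|\phi(k_2) - p| \ge r(1 - 1/C)$ and triangle containment, the diameter of $\phi(\triangle i j k_2)$ is at least $r(1 - 1/C)$, hence $M_i \ge r(1-1/C)/C_\epsilon$. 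Moreover, since $0$ is not in the interior of $\phi(R_i)$ (as $a$ is either outside $R_i$ or only on $\partial R_i$), the inradius estimate at $\phi(i)$ gives $|\phi(i)| \ge m_i \sin\epsilon \ge M_i \sin\epsilon / C'_\epsilon \ge c\, r$ for some $c = c(\epsilon) > 0$, and the same bound holds at $j$. Consequently the angle $\beta$ at $\phi(i)$ between the edge $\phi(ij)$ and the direction from $\phi(i)$ to $0$ satisfies $\sin\beta \le |p|/|\phi(i)| \le 1/(Cc)$, strictly less than $\sin\epsilon$ once $C$ is chosen large in terms of $\epsilon$.

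If $i \sim a$, the direction from $\phi(i)$ to $0$ is along the distinct edge $\phi(ia)$; but any two distinct edges at a common vertex are separated by at least one triangular sector of angle $\ge \epsilon$, so angular separation less than $\epsilon$ is impossible. If $i \not\sim a$, the small-angle bound forces the angular sector at $\phi(i)$ containing the direction toward $0$ to be one of the two sectors adjacent to $\phi(ij)$, and since $\gamma$ crosses from inside to outside at $p$ this critical sector must be the inside triangle $\phi(\triangle i j k_1)$. Running the symmetric analysis at $\phi(j)$, and combining with the height bound that the distance from $\phi(k_1)$ to the line through $\phi(ij)$ is at least $c\, r$ (a consequence of $|\phi(i k_1)| \ge m_i$ and $\sin\theta_i \ge \sin\epsilon$), one writes $0$ as an explicit interior convex combination of $\phi(i), \phi(j), \phi(k_1)$. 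But $0 = \phi(a)$ is a vertex of $T$ with $a \notin \{i, j, k_1\}$ (since $i, j \not\sim a$ forces $k_1 \ne a$), contradicting that no vertex of $T$ lies in the interior of any face. The main obstacle is this non-adjacent case $i \not\sim a$: a single-vertex angular argument does not close, so one must combine the estimates at both endpoints of $\phi(ij)$ with the inside triangle's geometric shape to place $0$ strictly inside $\phi(\triangle i j k_1)$. Choosing $C = C(\epsilon)$ large enough to absorb all intermediate constants completes the proof.
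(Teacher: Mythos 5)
Your overall strategy is sound but genuinely different from the paper's. The paper avoids all of your boundary analysis with a single compactness lemma: there is $\delta=\delta(\epsilon)>0$ such that every triangle satisfies $d(U_{ijk}^c,\phi(\triangle ijk))\geq\delta\cdot\diam(\phi(\triangle ijk))$, where $U_{ijk}$ is the union of the interiors of the images of the $1$-ring neighborhoods of $i,j,k$. Given $z\in D_{r/C}\setminus\phi(|T_r|)$, it takes the triangle of $T$ (not of $T_r$) containing $z$; that triangle has a vertex $i$ with $|\phi(i)|\geq r$, hence diameter at least $r-r/C$, while $0=\phi(a)\notin U_{ijk}$ (here one uses $\phi(R_a)\subseteq D_r$ to rule out $a\in\{j,k\}$), so $r/C\geq\delta(r-r/C)$, a contradiction for $C=1+2/\delta$. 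This sidesteps the exit-point analysis entirely; your route replaces the one compactness lemma with explicit edge-comparability and angle estimates, which is more self-contained and quantitative but also more delicate.

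The delicacy shows up in three places that your write-up does not close. (i) The first exit point $p$ of the segment need not lie in the open part of an edge $\phi(ij)$ with exactly one adjacent triangle in $T_r$: it can be a vertex, or lie on an edge of $T_r$ neither of whose adjacent triangles lies in $T_r$ (such ``dangling'' edges do occur in $T(V_r)$), or the segment can run along an edge. These cases need a perturbation of $z_0$ or the path, or a separate direct argument. (ii) $\sin\beta\leq 1/(Cc)<\sin\epsilon$ only yields $\beta<\epsilon$ \emph{or} $\beta>\pi-\epsilon$; the second alternative must be excluded (it is, since $\beta\geq\pi/2$ would force $|p|\geq|\phi(i)|\geq cr$, but this should be said). (iii) The dichotomy should be ``$a$ adjacent to $i$ or to $j$'' versus ``adjacent to neither'': in the second case you need $j\not\sim a$ as well, both to run the symmetric estimate at $\phi(j)$ and to guarantee $a\neq k_1$. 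With these patches the argument closes, and in fact more simply than you indicate: the ``height bound'' at $\phi(k_1)$ is not needed, because the open triangle $\phi(\triangle ijk_1)$ is exactly the intersection of the two open angular sectors at $\phi(i)$ and $\phi(j)$, so the two angle estimates plus the fact that $0$ lies on the $k_1$-side of the line through $\phi(ij)$ already place $0=\phi(a)$ in the open face, contradicting that $\phi$ is a homeomorphism.
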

\begin{proof}
By a standard compactness argument, it is not difficult to show that there exists a constant $\delta=\delta(\epsilon)>0$ such that
for all $\triangle ijk\in F$,
$$
d( U_{ijk}^c,
\phi(\triangle ijk))
\geq\delta \cdot \diam(\phi(\triangle ijk))
$$
where 
$$
U_{ijk}= int(\phi(R_i))\cup  int(\phi(R_j))\cup  int(\phi(R_k))\supseteq \phi(\triangle ijk).
$$

We claim that $C=1+2/\delta$ is a desired constant.
Let us prove by contradiction. Suppose $r>\max\{|\phi(i)|:ai\in E\}$ and
$
D_{r/C}\not\subseteq \phi(|T_r|).
$
Then there exists $z\in D_{r/C}\backslash\phi(|T_r|)$. Since $\phi$ is a geodesic homeomorphism, there exists a triangle $\triangle ijk\in F$ such that $z\in\phi(\triangle ijk)$. Then $\triangle ijk$ is not a triangle in $T_r$ and we may assume $i\notin V_r$. 
So $|\phi(i)|\geq r$ and $ai\notin E$ and $0=\phi(a)\notin U_{ijk}$.
Then
$$
{r}/{C}\geq|0-z|
\geq d(U_{ijk}^c,\phi(\triangle ijk))
\geq\delta\cdot
\diam(\phi(\triangle ijk))
$$
$$
\geq \delta\cdot |\phi(i)-z|
\geq\delta\cdot(r-r/C)
=
(r/C)\cdot\delta(C-1)=2r/C
$$
and we get a contradiction.
\end{proof}

\begin{proof}[Proof of Theorem \ref{bounded harmonic function}]
By replacing $\eta$ by $\eta/|\eta|_\infty$ we may assume that $|\eta|_\infty=1$.
Assume $\mu\in\mathbb R^E$ is defined as $\mu_e=1/\eta_e\geq1$ for all $e\in E$. Then by Lemma \ref{recurrent implies Liouville} we only need to show that $(T,\mu)$ is recurrent. Let $\mathbf1=(1,1,...,1)\in\mathbb R^E$. Then by the definition (equation (\ref{EL})) $EL(V_1,V_2,\mu)\geq EL(V_1,V_2,\mathbf1)$ whenever well-defined.
So we only need to show that $(T,\mathbf1)$ is recurrent.

Suppose $a\in V$ is a vertex and without loss of generality we may assume that $\phi(a)=0$. Let $\epsilon>0$ be the infimum
of the inner angles in the PL metric $l(\phi)$, and $C=C(\epsilon)>1$ be the constant given in Lemma \ref{43}. 

Let 
$
r_0=\max\{|\phi(i)|:ai\in E\}
$
and
$r_n=(2C)^nr_0$
and
$V_n=\{i\in V:\phi(i)\in D_{r_n}\}$
for all $n\in\mathbb Z_{\geq0}$.
Clearly $V_n$ is an increasing sequence of subsets of $V$ and $\cup_{n=1}^\infty V_n=V$.
We will prove the recurrency of $(T,\mathbf1)$ by showing that $EL(V_0,V_n^c,\mathbf1)\rightarrow\infty$ as $n\rightarrow\infty$.

By Lemma \ref{43} (b), $|\phi(i)|\geq r_n/C=2r_{n-1}$ 
if $i\in\partial V_{n}\cup V_{n}^c=
\overline{V_{n}^c}$.
So $V_{n-1}\cap\overline{V_{n}^c}=\emptyset$, i.e., $V_{n-1}\subseteq
(\overline{V_n^c})^c
= int(V_n)$. 
It is easy to see 
\begin{equation}
\label{E0}
E_0(V_{n-1},\overline{V_n^c})\subseteq E(V_n)\backslash E(V_{n-1}).
\end{equation}
From the definition of extremal length, we have
$$
EL(V_0, V_n^c)
\geq
EL(V_0, \overline{V_1^c})
+
EL(V_1,\overline{V_2^c})
+...+
EL(V_{n-1}, \overline{V_n^c})
$$
since
\begin{enumerate}
    \item $E_0(V_0,\overline{V_1^c}),E_0(V_1,\overline{V_2^c}),...,E_0(V_{n-1},\overline{V_n^c})$ are disjoint by equation (\ref{E0}), and
    \item $Q(V_0,\overline{V_1^c}),Q(V_1,\overline{V_2^c}),...
Q(V_{n-1},\overline{V_n^c})$ are all subsets of $Q(V_0,V_{n}^c)$.
\end{enumerate}
So it suffices to show that for all $n$,
$$
EL(V_{n-1}, \overline{V_{n}^c},\mathbf1)
\geq \frac{\sin^2\epsilon}{12\pi C^2},
$$
which by Theorem \ref{reciproal} is equivalent to
$$
EW(V_{n-1},\overline{V_{n}^c},\mathbf1)
\leq \frac{12\pi C^2}{\sin^2\epsilon}.
$$
In the remaining of the proof we
denote $E_0=E_0(V_{n-1},\overline{V_n^c})$.
Pick $w_{e}=l_e/r_{n-1}$, and then for any $p\in P=P(V_{n-1},\overline{V_{n}^c})$, 
$$
\sum_{e\in P}w_e
=\frac{1}{ r_{n-1}}\sum_{e\in P}l_e
\geq\frac{1}{ r_{n-1}}\cdot
d(\phi(V_{n-1}),\phi(\overline{V_n^c}))
\geq\frac{1}{ r_{n-1}}\cdot
(2r_{n-1}-r_{n-1})=1.
$$
So
$$
EW(V_{n-1},\overline{V_n^c},\mathbf1)
\leq\sum_{e\in E_0}w_e^2
=\frac{1}{r_{n-1}^2}\sum_{e\in E_0}l_e^2
$$
and it remains to show 
$$
\sum_{e\in E_0}l_e^2\leq\frac{12\pi C^2}{\sin^2\epsilon}\cdot r_{n-1}^2.
$$
Given $e\in E$, denote $\triangle_e,\triangle_e'$ as the two triangles in $T$ containing $e$. If $e\in E_0$, then $e$ contains at least 1 vertex in $(\overline{V_n^c})^c= int(V_n)$ and
$\triangle_e,\triangle_e'$ are both triangles in $T_n$, i.e.,
$\phi(\triangle_e),\phi(\triangle_e')$ are both in $D_{r_n}$.
Given a triangle $\triangle\in F$, we denote $|\triangle|$ as the area of $\phi(\triangle)$.
Then by the sine law
$$
|\triangle ijk|=\frac{1}{2}l_{ij}l_{jk}\sin\theta^j_{ik}
\geq\frac{1}{2}l_{ij}^2
\cdot\frac{\sin\theta^i_{jk}}{\sin\theta^k_{ij}}
\cdot\sin\theta^j_{ik}
\geq l_{ij}^2\cdot\frac{\sin^2\epsilon}{2}.
$$
Notice that a triangle $\triangle\in F$ is counted for at most $3$ times in $\sum_{e\in E_0}(|\triangle_e|+|\triangle_e'|)$ and then
$$
\sum_{e\in E_0}l_e^2
\leq\frac{1}{\sin^2\epsilon}
\sum_{e\in E_0}(|\triangle_e|+|\triangle_e'|)
\leq\frac{1}{\sin^2\epsilon}
\sum_{\triangle:\phi(\triangle)\subseteq D_{r_n}}3|\triangle|
=\frac{3\pi r_n^2}{\sin^2\epsilon}
=\frac{12\pi C^2}{\sin^2\epsilon}\cdot r_{n-1}^2.
$$
\end{proof}

\section{Hyperbolic Maximum Principles and Proof of Lemma \ref{compare}}
\label{hyperbolic conformality}

Given $z_1,z_2\in D$, we denote $d_h(z_1,z_2)$ as the hyperbolic distance between $z_1,z_2$ in the Poincar\'e disk model. The (Euclidean) discrete conformal change is related with the hyperbolic discrete conformal change as follows.
\begin{lemma}
\label{lemma51}
Suppose $z_1,z_2,z_1',z_2'\in D$ and $u_1,u_2,u_1^h,u_2^h\in\mathbb R$ are such that
$$
u_i^h=u_i+\log\frac{1-|z_i|^2}{1-|z_i'|^2}
$$
for $i=1,2$. Then
$$
|z_1'-z_2'|=e^{\frac{1}{2}(u_1+u_2)}|z_1-z_2|
$$
if and only if 
\begin{equation}
\label{hyperbolic discrete conformal}
\sinh \frac{d_h(z_i',z_j')}{2}
=e^{\frac{1}{2}(u_i^h+u_j^h)}\sinh\frac{d_h(z_i,z_j)}{2}.
\end{equation}
\end{lemma}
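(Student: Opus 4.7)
The plan is to reduce both the Euclidean and hyperbolic conformal equations to the same algebraic identity via a single classical formula for the hyperbolic distance in the Poincar\'e disk. The key ingredient is the well-known identity
\[
\sinh\frac{d_h(z,w)}{2}
=\frac{|z-w|}{\sqrt{(1-|z|^2)(1-|w|^2)}},
\qquad z,w\in D,
\]
which I would first recall (and, if needed, quickly verify using $d_h(z,w)=2\tanh^{-1}|(z-w)/(1-\bar z w)|$ together with the Ptolemy-type computation $|1-\bar z w|^2-|z-w|^2=(1-|z|^2)(1-|w|^2)$).

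Given this identity, the proof is a direct substitution. Set $a_i=1-|z_i|^2$ and $a_i'=1-|z_i'|^2$, so the hypothesis $u_i^h=u_i+\log(a_i/a_i')$ rewrites as
\[
e^{\frac{1}{2}(u_i^h+u_j^h)}
=e^{\frac{1}{2}(u_i+u_j)}\sqrt{\frac{a_ia_j}{a_i'a_j'}}.
\]
Then the hyperbolic conformality identity (\ref{hyperbolic discrete conformal}) becomes, after applying the key formula on both sides,
\[
\frac{|z_i'-z_j'|}{\sqrt{a_i'a_j'}}
=e^{\frac{1}{2}(u_i+u_j)}\sqrt{\frac{a_ia_j}{a_i'a_j'}}\cdot\frac{|z_i-z_j|}{\sqrt{a_ia_j}}
=e^{\frac{1}{2}(u_i+u_j)}\frac{|z_i-z_j|}{\sqrt{a_i'a_j'}}.
\]
Clearing the common factor $1/\sqrt{a_i'a_j'}$ (which is positive), this is visibly equivalent to the Euclidean conformality equation $|z_1'-z_2'|=e^{\frac{1}{2}(u_1+u_2)}|z_1-z_2|$, which establishes the ``if and only if''.

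There is no real obstacle here beyond recalling the distance formula; the entire content of the lemma is that the change of variable $u\mapsto u^h=u+\log\bigl((1-|z|^2)/(1-|z'|^2)\bigr)$ exactly absorbs the conformal factors $\sqrt{(1-|z_i|^2)(1-|z_j|^2)}$ appearing in the hyperbolic length formula. I would therefore keep the write-up to a short computation: state the Poincar\'e distance identity, substitute, and observe the equivalence of the two equations after cancelling the positive factor $1/\sqrt{a_i'a_j'}$.
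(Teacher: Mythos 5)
Your proposal is correct and follows essentially the same route as the paper's appendix proof: both reduce the lemma to the identity $\sinh\tfrac{d_h(z,w)}{2}=|z-w|/\sqrt{(1-|z|^2)(1-|w|^2)}$, verified via the algebraic identity $|1-\bar z w|^2-|z-w|^2=(1-|z|^2)(1-|w|^2)$, after which the equivalence is a direct substitution. No issues.
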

\begin{rmk}
Equation (\ref{hyperbolic discrete conformal}) is indeed the formula of the discrete conformal change for piecewise hyperbolic metric. This formula was first proposed by Bobenko-Pinkall-Springborn \cite{bobenko2015discrete}, and $u^h_i$ in the formula is called the hyperbolic discrete conformal factor at $i$.
\end{rmk}
Lemma \ref{lemma51} could be verified by elementary computations. The proof is given in Appendix.
The hyperbolic discrete conformal factor $u^h$ also satisfies a maximum principle.

\begin{lemma}
\label{hyperbolic maximum}
Suppose $V_0$ is a subset of $V$ and $u\in\mathbb R^{V_0}$ and $\phi,\psi$ are Euclidean geodesic embeddings of $T(V_0)$, such that 
$\phi(|T(V_0)|),\psi(|T(V_0)|)\subseteq D$ and
$l(\phi),l(\psi)$ are both uniformly acute and
$l(\psi)=u*l(\phi)$. For all $i\in V_0$,
denote $z_i=\phi(i)$ and $z_i'=\psi(i)$ and
$$
u_i^h=u_i+\log\frac{1-|z_i|^2}{1-|z_i'|^2}.
$$
\begin{enumerate}[label=(\alph*)]
\item If $i\in  int(V_0)$ and $u_i^h< 0$, then there exists a neighbor $j$ of $i$ such that
$$
u_j^h< u_i^h.
$$
\item If 
$u_i^h\geq0$ for all $i\in\partial V_0$, then $u_i^h\geq0$ for all $i\in V_0$.
\end{enumerate}
\end{lemma}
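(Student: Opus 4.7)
The plan is to reduce part (b) to part (a) by a standard minimum-principle contradiction, and then prove part (a) by transferring the entire setup to the hyperbolic side via Lemma \ref{lemma51} and invoking a hyperbolic analogue of the local maximum principle.

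For (b), assume $u_i^h\ge 0$ on $\partial V_0$ but $u_{i_0}^h<0$ for some $i_0\in V_0$. In the finite case a minimizer $i^\ast$ of $u^h$ on $V_0$ satisfies $u_{i^\ast}^h<0$ and, by the boundary hypothesis, lies in $\text{int}(V_0)$; applying (a) at $i^\ast$ produces a neighbor with strictly smaller $u^h$, contradicting minimality. For infinite $V_0$, exhaust by finite subcomplexes containing $i_0$ together with its 1-ring and run the same argument.

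For (a), fix $i\in\text{int}(V_0)$ with $u_i^h<0$ and suppose for contradiction that $u_j^h\ge u_i^h$ for every neighbor $j$. Restrict to the 1-ring $R_i$ and use Lemma \ref{lemma51}: the Euclidean relation $l(\psi)=u*l(\phi)$ translates to the hyperbolic discrete conformal equation
\[
\sinh\frac{d_h(\psi(i),\psi(j))}{2}=e^{(u_i^h+u_j^h)/2}\sinh\frac{d_h(\phi(i),\phi(j))}{2},
\]
so the hyperbolic edge lengths $l^h(\phi),l^h(\psi)$ form a hyperbolic discrete conformal pair on $R_i$ with factor $u^h$. Uniform Euclidean acuteness keeps the corresponding hyperbolic triangles non-degenerate, with well-defined hyperbolic angle $\theta^{h,i}_{jk}$ at $i$ in each.

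The crucial step is a hyperbolic analogue of Lemma \ref{local maximum principle}: for two hyperbolic PL metrics on the 1-ring of $i$, related by a hyperbolic discrete conformal change and both induced by Euclidean geodesic embeddings into $D$ (which pins the Euclidean cone angle at $\phi(i)$ and $\psi(i)$ to $2\pi$), the conformal factor $u^h$ cannot simultaneously satisfy $u_i^h<0$ and $u_j^h\ge u_i^h$ for all neighbors $j$. I plan to prove this via a cotangent-type differential identity for the hyperbolic cone angle $K_i^h=2\pi-\sum_{jk}\theta^{h,i}_{jk}$, in the spirit of Proposition \ref{differential of curvature} but carrying an additional drift term proportional to $u_i^h$ arising from the ambient hyperbolic curvature; combined with a M\"obius reduction sending $\phi(i)$ (resp.\ $\psi(i)$) to $0$, where Euclidean and hyperbolic angles at those points coincide, the Euclidean flatness $2\pi$ becomes a precise constraint on $K_i^h$, and the sign of the drift forces a strict inequality contradicting our hypothesis.

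The main obstacle I anticipate is twofold and concentrated in the hyperbolic local maximum principle: (i) deriving the hyperbolic curvature-derivative formula with the correctly signed extra term, and (ii) converting the Euclidean angle-sum $=2\pi$ constraint at $\phi(i),\psi(i)$ into a clean hyperbolic constraint on $K_i^h$, since Euclidean and hyperbolic angles at a point of $D$ coincide only at $0$. Making these two ingredients combine into a strict inequality under the uniform acuteness hypothesis (which controls the positivity of the hyperbolic cotangent weights $\eta^h_{ij}$) is where I expect the genuine work to lie.
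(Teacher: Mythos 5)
Your overall architecture is sound — part (b) by the standard minimum argument, part (a) by passing to the hyperbolic picture via Lemma \ref{lemma51} and M\"obius-normalizing $\phi(i),\psi(i)$ to the origin — and this matches the paper's strategy up to that point. But your execution of part (a) has two genuine gaps. First, you assert that after the M\"obius reduction ``the Euclidean flatness $2\pi$ becomes a precise constraint on $K_i^h$,'' treating the hyperbolic angles at $i$ as automatically well-defined and summing to $2\pi$. This is exactly the nontrivial content of the paper's Lemma \ref{no overlap}: one must show that the hyperbolic geodesic rays from $z_i$ to its neighbors are cyclically ordered and wrap around once, i.e.\ that the Euclidean 1-ring embedding upgrades to a hyperbolic geodesic embedding (and that the circumdisks, hence the Delaunay property, are preserved). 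A M\"obius map does not preserve Euclidean segments, so the normalized points $\tilde z_{j_k}$ need not a priori be star-shaped around $0$; the paper rules out the bad configurations precisely by the acuteness hypothesis (the two cases in Figure 1). You assign acuteness only the role of controlling the positivity of hyperbolic cotangent weights, which misses where it does the real work. Second, your plan rests on a hyperbolic local maximum principle derived from a hyperbolic curvature-differential formula with a ``drift term.'' Such a statement is plausibly true, but it is not in the paper, is not cited, and proving it (correct sign of the drift, nonnegativity of the hyperbolic weights under Delaunay, and the Delaunay-along-the-path issue in the integral argument) is a substantial project in itself. The paper deliberately avoids all of this: after normalizing to the origin it converts the hyperbolic Delaunay geodesic embeddings \emph{back} to Euclidean Delaunay geodesic embeddings $\tilde\phi,\tilde\psi$ on the same vertex set, applies the already-available Euclidean Lemma \ref{local maximum principle} to the factor $\tilde u_j=u_j^h-\log\frac{1-|\tilde z_j|^2}{1-|\tilde z_j'|^2}$ (noting $\tilde u_i=u_i^h$ since $\tilde z_i=\tilde z_i'=0$), and then obtains the \emph{strict} inequality $u_j^h<\tilde u_j\le\tilde u_i=u_i^h$ from the elementary observation that $\tilde u_i+\tilde u_j<0$ forces $|\tilde z_j'|<|\tilde z_j|$, making the logarithmic correction negative. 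No hyperbolic curvature formula is ever needed.

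A smaller point on part (b): your finite-case argument is the same as the paper's, but your proposed exhaustion for infinite $V_0$ does not work as stated, since the boundary of a finite exhausting subcomplex need not lie in $\partial V_0$, so the hypothesis $u^h\ge0$ is unavailable there. (The paper's own proof of (b) also tacitly assumes the minimum of $u^h$ is attained; in the application within Lemma \ref{compare} the relevant vertex sets are finite, so this is harmless, but your fix does not repair the general case.)
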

We first prove Lemma \ref{compare}
using the hyperbolic maximum princple and then prove Lemma \ref{hyperbolic maximum}.
\begin{proof}[Proof of Lemma \ref{compare}]
For any $\triangle ijk\in F$, 
$$
e^{\frac{1}{2}(u_j-u_i)}=
\frac
{e^{(u_j+u_k)/2}}
{e^{(u_i+u_k)/2}}
=
\frac{l_{jk}(\psi)/l_{jk}(\phi)}
{l_{ik}(\psi)/l_{ik}(\phi)}
=
\frac
{l_{jk}(\psi)}
{l_{ik}(\psi)}
\cdot
\frac
{l_{ik}(\phi)}
{l_{jk}(\phi)}
\geq\sin^2\epsilon.
$$
So there exists a constant $C=C(\epsilon)>0$ such that $|u_j-u_i|\leq 2C$ for all $ij\in E$.
We will show that $M(\epsilon)=C(\epsilon)+3$ is a satisfactory constant.
By a scaling, we only need to prove for the special case where $r'=1$ and $r=e^{-C-2}$.

Denote
$V_1=\{i\in V:\psi(i)\in D \}$ and $z_i=\phi(i)$ and $z_i'=\psi(i)$. 
Define $u^h\in\mathbb R^{V_1}$ as
$$
u_i^h=u_i+\log\frac{1-|z_i|^2}{1-|z_i'|^2}
$$
for all $i\in V_1$.
Assume $i\in \partial V_1$, then there exists $j\in V_0-V_1$ such that $ij\in E$. 
We claim that $u_i^h\geq0$, i.e.,
$$
e^{u_i}\cdot\frac{1-|z_i|^2}{1-|z_i'|^2}\geq1.
$$
Notice that
$$
1-|z_i'|\leq|z_i'-z_j'|=e^{\frac{1}{2}(u_i+u_j)}|z_i-z_j|\leq e^{u_i+C}\cdot 2r
=2e^{-2}e^{u_i}.
$$
So
$$
e^{u_i}\cdot\frac{1-|z_i|^2}{1-|z_i'|^2}
\geq\frac{e^2}{2}\cdot\frac{1-|z_i|^2}{1+|z_i'|}\geq\frac{e^2}{2}\cdot\frac{1-r^2}{2}
\geq\frac{e^2}{2}\cdot\frac{1-(e^{-2})^2}{2}>1.
$$
By the hyperbolic maximum principle Lemma \ref{hyperbolic maximum} (b), 
$u_i^h\geq0$ for all $i\in V_1$. Then for all $i\in V_0$ with $|z_i'|<1/2$,
$$
u_i=u_i^h-\log \frac{1-|z_i|^2}{1-|z_i'|^2}
\geq-\log \frac{1-|z_i|^2}{1-|z_i'|^2}
\geq\log(1-|z_i'|^2)\geq-1=\log (r'/r)-M.
$$
\end{proof}

\subsection{Proof of the Hyperbolic Maximum Principle}
For the proof of Lemma \ref{hyperbolic maximum},
we need to briefly review the notion of hyperbolic Delaunay.
Given a subcomplex $T_0$ of $T$, an embedding $\phi:|T_0|\rightarrow D$ is called a \emph{hyperbolic geodesic embedding} if $\phi_h$ maps each edge of $T_1$ to a hyperbolic geodesic arc in $(D,d_h)$. 
Given a triangle $\triangle{ijk}$ in $T_0$ and a Euclidean or hyperbolic geodesic embedding $\phi$
of $T_0$, denote $C_{ijk}=C_{ijk}(\phi)$ as the circumcircle of $\phi(\triangle ijk)$, i.e., a round circle in the Riemann sphere $\hat{\mathbb C}$ passing through the three vertices of $\phi(\triangle ijk)$. Furthermore, we denote 
$D_{ijk}=D_{ijk}(\phi)$ as the circumdisk of $\phi(\triangle ijk)$, i.e., the closed round disk in $\hat{\mathbb C}$ such that $\partial D_{ijk}=C_{ijk}$ and $\phi(\triangle ijk)\subseteq D_{ijk}$. 
For a Euclidean geodesic embedding $\phi$ of $T_0$, it is well-known that $l(\phi)$ is Delaunay if and only if that for any pair of adjacent triangles $\triangle ijk,\triangle ijk'$ in $T_0$, 
$$
\phi(k')\notin  int(D_{ijk}).
$$ 
So here we naturally call a Euclidean or hyperbolic geodesic embedding $\phi$ \emph{Delaunay} if
$$
\phi(k')\notin  int(D_{ijk})
$$ 
for any pair of adjacent triangles 
$
\triangle ijk,\triangle ijk'
$
in $T_0$.

\begin{proof}[Proof of Lemma \ref{hyperbolic maximum} (a)]
Assume $i\in  int(V_0)$ and $T_1=(V_1,E_1,F_1)$ is the 1-ring neighborhood of $i$. Then by Lemma \ref{no overlap} below there exists a hyperbolic Delaunay geodesic embedding  $\phi_h$ (\emph{resp.} $\psi_h$) of $T_1$ such that
$\phi_h(j)=z_j$ (\emph{resp.} $\psi_h(j)=z_j'$) for all $j\in V_1$.
By Lemma \ref{lemma51},
$$
\sinh \frac{d_h(z_j',z_k')}{2}
=e^{\frac{1}{2}(u_j^h+u_k^h)}\sinh\frac{d_h(z_j,z_k)}{2}
$$
for all $jk\in E_1$.
Suppose $f_1,f_2:D\rightarrow D$ are hyperbolic isometries such that $f_1(z_i)=0$ and $f_2(z_i')=0$. 
Then $\tilde\phi_h=f_1\circ\phi_h$
(\emph{resp.} $\tilde\psi_h=f_2\circ\psi_h$)
is a hyperbolic Delaunay geodesic embedding.
Denote $\tilde z_j=\tilde\phi_h(j)$
(\emph{resp.} $\tilde z_j'=\tilde\psi_h(j)$) for all $j\in V_1$. 
Then $z_i=z_i'=0$ and
$$
\sinh \frac{d_h(\tilde z_j',\tilde z_k')}{2}
=e^{\frac{1}{2}(u_j^h+u_k^h)}\sinh\frac{d_h(\tilde z_j,\tilde z_k)}{2}
$$
for all $jk\in E_1$.
It is not hard to see that there exists a Euclidean Delaunay geodesic embedding 
$\tilde\phi$ (\emph{resp.} $\tilde\psi$) of $T_1$
such that $\tilde\phi(j)=\tilde z_j$
(\emph{resp.} $\tilde\psi(j)=\tilde z_j'$). By Lemma \ref{lemma51} $l(\tilde\psi)=\tilde u*l(\tilde\phi)$ where
$$
\tilde u_j=u_j^h-\log\frac{1-|\tilde z_j|^2}{1-|\tilde z_j'|^2}.
$$
By the Euclidean maximum principle 
Lemma \ref{local maximum principle},
$\tilde u_j\leq \tilde u_i<0$ for some neighbor $j$ of $i$. Then 
$$
|\tilde z_j'|=l_{ij}(\tilde \psi)=e^{\frac{1}{2}(\tilde u_i+\tilde u_j)}l_{ij}(\tilde \phi)=e^{\frac{1}{2}(\tilde u_i+\tilde u_j)}|\tilde z_j|<|\tilde z_j|
$$
and
$$
u_j^h=\tilde u_j+\log\frac{1-|\tilde z_j|^2}{1-|\tilde z_j'|^2}
<\tilde u_j\leq \tilde u_i
= u_i^h-\frac{1-|\tilde z_i|^2}{1-|\tilde z_i'|^2}
=u_i^h.
$$
\end{proof}

\begin{proof}[Proof of Lemma \ref{hyperbolic maximum} (b)]
If not, assume $u_i^h=\min_{j:j\in V_0}u_j^h<0$ and then $i\in int(V_0)$.
By the minimality of $u_i^h$, $u_j^h\geq u_i^h$ for any neighbor $j$ of $i$.
This contradicts with part (a).
\end{proof}
\begin{lemma}
\label{no overlap}
Suppose $i\in V$ and $T_1=(V_1,E_1,F_1)$ is a 1-ring neighborhood of $i$. If $\phi$ is a geodesic embedding of $T_1$ 
such that $\phi(|T_1|)\subseteq D$ and $l(\phi)$ is uniformly acute, then there exists a hyperbolic geodesic embedding $\phi_h$ of $T_1$ such that
$\phi_h(j)=\phi(j)$ for all $j\in V_1$. Furthermore, such $\phi_h$ is Delaunay.

\end{lemma}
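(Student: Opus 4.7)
My plan is to construct $\phi_h$ by prescribing its values on vertices (the required $\phi_h(j)=\phi(j)$) and extending via hyperbolic geodesic arcs on edges and the natural hyperbolic-geodesic interpolation on faces, then verify that $\phi_h$ is an embedding and that it is Delaunay. The main simplification is to apply a hyperbolic isometry of $D$ (a M\"obius transformation preserving $\partial D$) so that $\phi(i)=0$. This reduction is legitimate because hyperbolic isometries preserve both the notion of hyperbolic geodesic embedding and (since M\"obius maps of $\hat{\mathbb C}$ send round circles to round circles) the Delaunay condition on the hyperbolic circumdisks. Under this normalization, each spoke $[\phi(i),\phi(j_k)]$ is a Euclidean radial segment, which is itself a hyperbolic geodesic, so the spoke edges of $\phi_h$ literally coincide with those of $\phi$.

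The key geometric input I plan to use is: for any two distinct points $p,q\in D$, the hyperbolic geodesic arc from $p$ to $q$ lies in the closed Euclidean half-plane bounded by the Euclidean line through $p$ and $q$ that contains the origin. I would prove this by a direct algebraic calculation. The center $c$ of the Euclidean circle through $p,q$ orthogonal to $\partial D$ satisfies $|c|^2=1+R^2$, equivalent to the two linear constraints $\Re(\bar c p)=(1+|p|^2)/2$ and $\Re(\bar c q)=(1+|q|^2)/2$. Solving for $c$ and substituting into the line equation of the chord $pq$ shows that $c$ and $0$ lie on opposite sides of the chord; writing $r=\Re(\bar p q)$, $s=|p|^2$, $t=|q|^2$, the claim reduces to verifying that the quadratic $f(r)=r^2+r(1+(s+t)/2)-((s+t)/2+2st)$ is nonpositive on $|r|\leq\sqrt{st}$, which follows from convexity together with the easily checked $f(\pm\sqrt{st})\leq 0$. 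Since $c$ lies outside $D$, the inside-$D$ arc of the orthogonal circle must be the arc opposite to $c$, hence on the origin's side of the chord. This curving-toward-origin calculation is the main technical obstacle I foresee.

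Applying this fact to each outer edge $[\phi(j_k),\phi(j_{k+1})]$ of the 1-ring places the corresponding hyperbolic arc inside the Euclidean triangle $\triangle(0,\phi(j_k),\phi(j_{k+1}))$, so the hyperbolic triangle (bounded by the two spokes and the arc) is contained in the Euclidean triangle with the same vertices. Since $i$ is an interior vertex of $T_1$, the Euclidean angles at $0$ sum to $2\pi$, so the Euclidean triangles fan around $0$ with pairwise disjoint interiors; by containment the hyperbolic triangles also have pairwise disjoint interiors, giving that $\phi_h$ is a hyperbolic geodesic embedding. For Delaunay, since each hyperbolic triangle is contained in the corresponding Euclidean triangle and hence in its Euclidean circumdisk, the round disk in $\hat{\mathbb C}$ bounded by the common Euclidean circumcircle that contains the triangle image is the same in both pictures; the hyperbolic and Euclidean circumdisks coincide as subsets of $\hat{\mathbb C}$. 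Finally, uniform acuteness forces the two Euclidean angles opposite any shared spoke $[i,j_m]$ to each be at most $\pi/2-\epsilon$, giving sum $\leq\pi-2\epsilon<\pi$, which yields (strict) Euclidean Delaunay; since the circumdisks coincide, hyperbolic Delaunay follows.
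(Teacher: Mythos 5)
There is a genuine gap, and it sits in your very first step: the reduction to $\phi(i)=0$ by a hyperbolic isometry $f$ is not legitimate. A M\"obius transformation preserves hyperbolic geodesics and circumdisks, so it transports the \emph{conclusion} correctly, but it does not preserve Euclidean segments or Euclidean angles, so it destroys the \emph{hypotheses}: the points $f(\phi(j))$, joined by straight segments, no longer realize the metric $l(\phi)$, need not be uniformly acute, and need not even form a Euclidean geodesic embedding. In particular, the Euclidean angle at $0$ between the rays to $f(\phi(j_k))$ and $f(\phi(j_{k+1}))$ equals the \emph{hyperbolic} angle at $\phi(i)$ between the hyperbolic geodesics to $\phi(j_k)$ and $\phi(j_{k+1})$, so your assertion that ``the Euclidean angles at $0$ sum to $2\pi$'' is precisely the nontrivial claim that the directions $v(z_k)=\exp_{\phi(i)}^{-1}(\phi(j_k))$ wrap once monotonically around $\phi(i)$, i.e.\ equations (\ref{counterclockwise}) and (\ref{sum}) of the paper, which is exactly where the acuteness hypothesis does its real work. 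Note that in the normalized case your argument never uses acuteness for the embedding part, yet the conclusion genuinely needs an angle condition when $\phi(i)\neq 0$: if some $\phi(j_k)$ lies far from $\phi(i)$ in a nearly tangential direction while $\phi(j_{k+1})$ lies very close to $\phi(i)$ at a slightly larger Euclidean argument (a very obtuse configuration), the geodesic to $\phi(j_k)$ bows toward the origin enough that $v(z_{k+1})$ falls clockwise of $v(z_k)$ and the hyperbolic triangles overlap. The paper works at the general base point and, for each spoke, compares the Euclidean half-plane $P$ with the hyperbolic half-plane $P_h$ determined by $\phi(i),\phi(j_k)$, using acuteness in a two-case analysis on the orientation of the orthogonal arc to conclude $\phi(j_{k+1})\in P_h$; this is the step your reduction skips.

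Two smaller points. First, even with base point $0$, the half-plane fact alone does not place the hyperbolic arc from $p$ to $q$ inside the Euclidean triangle with vertices $0,p,q$: you also need that the arc meets the segments $[0,p]$ and $[0,q]$ only at their endpoints. This does hold, because the circle carrying the arc is orthogonal to $\partial D$, hence invariant under the inversion $z\mapsto 1/\bar z$, and therefore meets the line through $0$ and $p$ only at $p$ and $1/\bar p\notin D$; but it should be said. Second, your closing Delaunay argument (acute implies Euclidean Delaunay, plus coincidence of the Euclidean and hyperbolic circumdisks once the containment is known) matches the paper's final step and is fine once the embedding statement has actually been established.
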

\begin{proof}
Let 
$j_1,j_2,...,j_m$ be the neighbors of $i$ listed counterclockwise in $\phi(|T_1|)$. 
Denote
$z_0=\phi(i)$
and 
$z_k=\phi(j_k)$ for $k=1,...,m$.
If $\gamma(t):[0,1]\rightarrow D$ is a smooth curve such that $\gamma(0)=z_0$, then
$\dot\gamma(t)$ could be viewed as not only a complex number but also a vector in the tangent space $T_{z_0}D$ of $(D,d_h)$ at $z_0$. By this way we naturally identify $T_{z_0}D$ with $\mathbb C$.

Given $z\in D$, let
$
v(z)=\exp_{z_0}^{-1}z\in T_{z_0}D=\mathbb C
$ 
where $\exp_{z_0}:T_{z_0}D\rightarrow D$ is the exponential map at $z_0$ on the hyperbolic plane $D$. We first show that $v(z_1),...,v(z_m)$
are counterclockwise around 0 and wrap around $0$ once. More specifically, we will show that
\begin{equation}
\label{counterclockwise}
\arg\left(\frac{v({z_{k+1}})}{v({z_{k})}}\right)\in(0,\pi)
\end{equation}
and
\begin{equation}
\label{sum}
\sum_{k=1}^m
\arg\left(\frac{v({z_{k+1}})}{v({z_{k})}}\right)=2\pi
\end{equation}
where
$z_{m+1}=z_1$ and
$\arg(z)$ denotes the argument of $z$.

Assume $k\in\{1,...,m\}$. Denote $\gamma$ (\emph{resp.} $\gamma_h$) as the Euclidean straight line in $\mathbb C$ (hyperbolic geodesic in $D$) containing $z_0,z_k$. 
Then 
$\gamma$ (\emph{resp.} $\gamma_h$) cuts $\mathbb C$ (\emph{resp.} D) into two open subsets $P,P'$ (\emph{resp.} $P_h,P'_h$). 
We may assume 
$$
P=\{z\in \mathbb C:\arg\left(\frac{z-z_0}{z_k-z_0}\right)\in(0,\pi)\}
$$ 
and
$$
P_h=\{z\in  D:\arg\left(\frac{v(z)}{v(z_k)}\right)\in(0,\pi)\}.
$$ 
Then $z_{k+1}\in P$. If $\gamma_h$ is a straight line, $P_h=P\ni z_{k+1}$ and 
we have proved equation (\ref{counterclockwise}). 
If $\gamma_h$ is a round circular arc  orthogonal to $\{|z|=1\}$, there are two different cases.

Case 1: assume $z_0,z_k$ are counterclockwise on $\gamma_h$ (see Figure 1 (A)).  
If $z_{k+1}\in P\backslash P_h$, $\angle z_0z_{k}z_{k+1}>\pi/2$ 
or
$\angle z_kz_{0}z_{k+1}>\pi/2$
and it is contradictory to the acuteness assumption. So $z_{k+1}\in P_{h}$.

Case 2: assume $z_0,z_k$ are clockwise on $\gamma_h$ (see Figure 1 (B)). 
If $z_{k+1}\in P\backslash P_h$, $\angle z_0z_{k+1}z_k>\pi/2$ and it is contradictory to the acuteness assumption.
So $z_{k+1}\in P_h$.

\begin{figure}[h]
	 	\centering
	 \begin{subfigure}[h]{0.4\textwidth}
	 	 	\centering
	 	\includegraphics[width=1\textwidth]{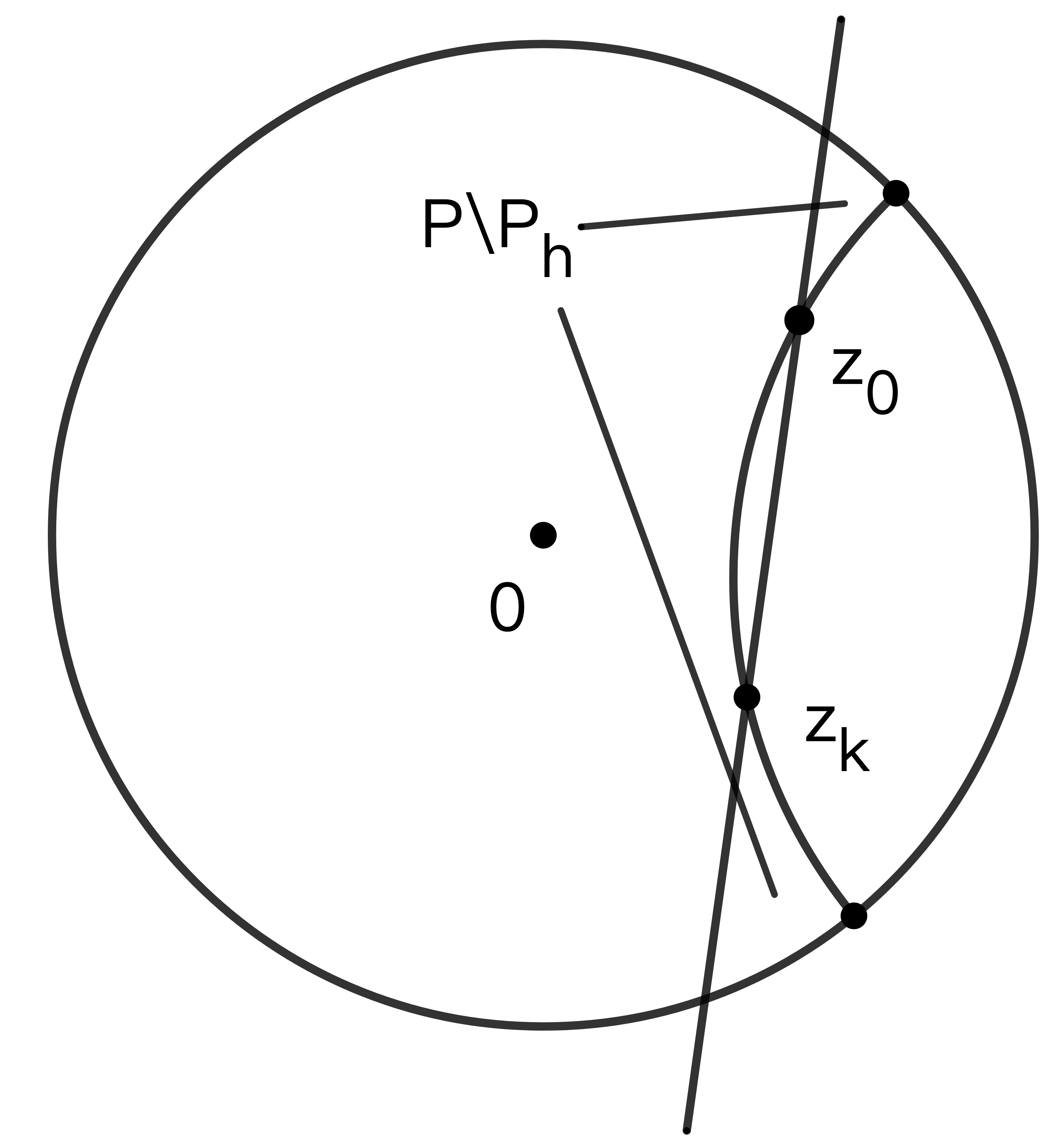}
	 	    \caption{Case 1}
	 \end{subfigure}
 \hspace{0.1cm}
    \begin{subfigure}[h]{0.4\textwidth}
	\centering
	\includegraphics[width=1\textwidth]{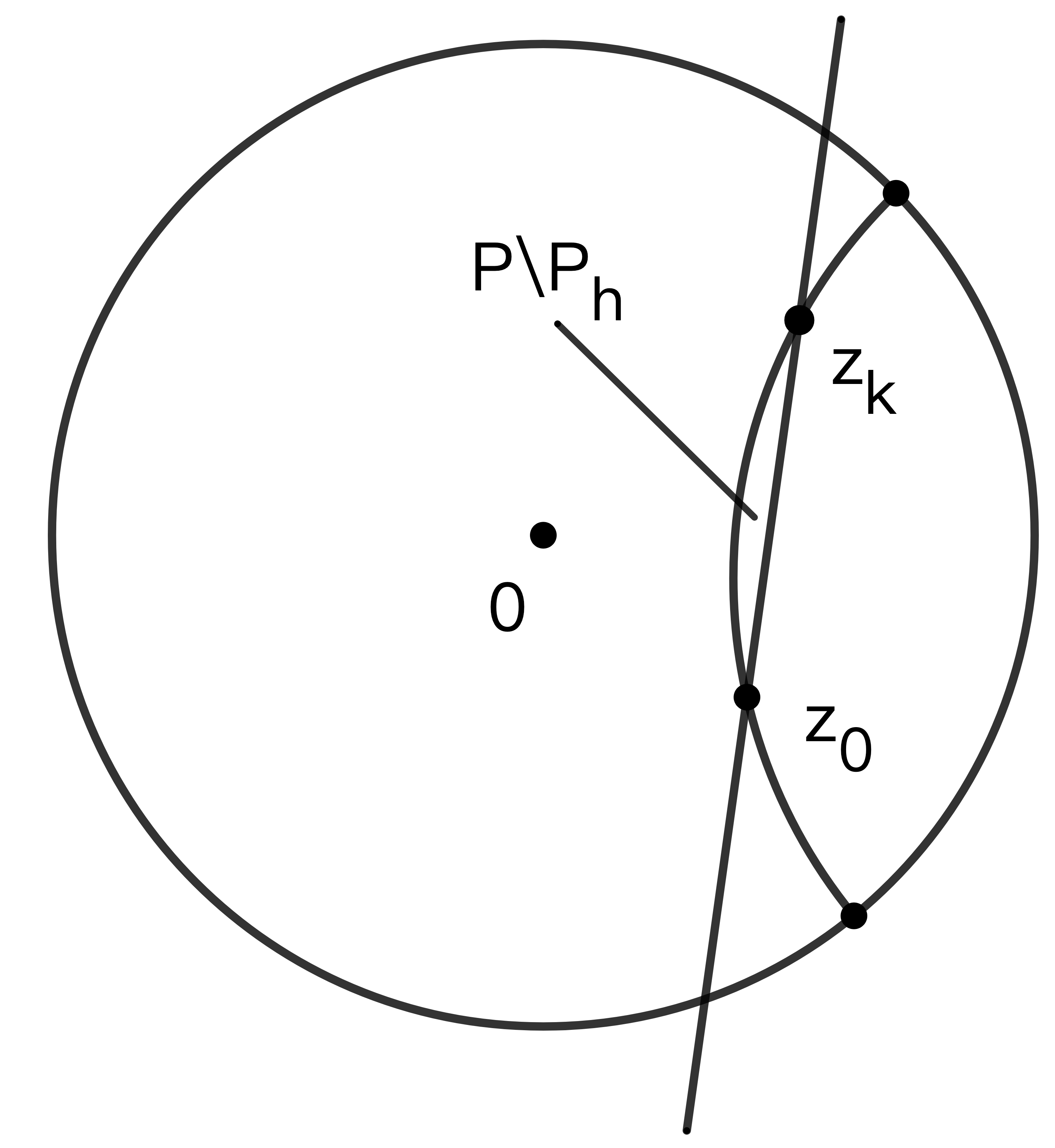}
    \caption{Case 2}
    \end{subfigure}
    \caption{}
\label{Equivalence between Delaunay and convexity}
\end{figure}

So we proved equation (\ref{counterclockwise}) and
now prove equation (\ref{sum}).
It is easy to see that

$$
\arg\left(
\frac{v(z_k)}{z_k-z_0}
\right)
\in(-\frac{\pi}{2},\frac{\pi}{2}).
$$
for all $k=1,...,m$.
We claim that
\begin{equation}
\label{arg}
\arg\left(\frac{v({z_{k+1}})}{v({z_{k})}}\right)
+
\arg\left(
\frac{v(z_k)}{z_k-z_0}
\right)
=
\arg
\left(
\frac{z_{k+1}-z_0}{z_k-z_0}\right)
+
\arg\left(
\frac{v(z_{k+1})}{z_{k+1}-z_0}
\right).
\end{equation}
Since
$$
\exp(\sqrt{-1}\cdot LHS)
=\exp(\sqrt{-1}\cdot RHS)
=\frac{v(z_{k+1})}{z_k-z_0},
$$
we have that
$$
LHS=RHS+2n\pi
$$
for some integer $n$.
On the other hand $LHS$ and $RHS$ are both bounded in
$$
(0-\frac{\pi}{2},\pi+\frac{\pi}{2})
=(-\frac{\pi}{2},\frac{3\pi}{2}),
$$
so $LHS=RHS$.
Now by adding up equation (\ref{arg}) for $k=1,...,m$ we have that
$$
\sum_{k=1}^m
\arg\left(\frac{v({z_{k+1}})}{v({z_{k})}}\right)
=
\sum_{k=1}^m
\arg
\left(
\frac{z_{k+1}-z_0}{z_k-z_0}\right)=2\pi
$$
since $\phi$ is a geodesic embedding. 
So we proved equations (\ref{counterclockwise}) and (\ref{sum}), and as a consequence there exists a hyperbolic embedding $\phi_h$ of $|T_1|$ such that $\phi_h(j)=z_j$ for all $j\in V_1$.

By equation (\ref{counterclockwise}) it is not difficult to see that 
the two circumdisks
$
D_{ij_kj_{k+1}}(\phi)
$
and
$
D_{ij_kj_{k+1}}(\phi_h)
$
are the same for $k=1,...,m$. 
So $\phi_h$ is Delaunay since $\phi$ is Delaunay.

\end{proof}

\appendix
\section{Proof of Lemma \ref{lemma51}}
\label{appendix}

\begin{proof}[Proof of Lemma \ref{lemma51}]
It suffices to show that for all $z_1,z_2\in D$,
$$
\sinh\frac{d_h(z_1,z_2)}{2}=
\frac{|z_1-z_2|}
{\sqrt {(1-|z_1|^2)(1-|z_2|^2)} }.
$$
We first consider a special case where $z_1=0$ and $z_2=r\in(0,1)$ is real.
Then
$$
d_h(z_1,z_2)=\ln\frac{1\cdot(1+r)}{1\cdot (1-r)}=\ln\frac{1+r}{1-r}
$$
and
$$
\sinh\frac{d_h(z_1,z_2)}{2}
=\frac{1}{2}\sqrt{\frac{1+r}{1-r}}-
\frac{1}{2}\sqrt{\frac{1-r}{1+r}}
=\frac{r}{\sqrt{1-r^2}}
=\frac{|z_1-z_2|}
{\sqrt {(1-|z_1|^2)(1-|z_2|^2)} }.
$$
For general $z_1,z_2\in D$, we can find a hyperbolic isometric map $f(z)= \frac{z-a}{1-\bar a z}$ such that $f(z_1)=0$  and $f(z_2)$ is a positive real number. We only need to verify that 
$$
\frac{|f(z_1)-f(z_2)|^2}{{(1-|f(z_1)|^2)(1-|f(z_2)|^2)}}
=
\frac{|z_1-z_2|^2}{{(1-|z_1|^2)(1-|z_2|^2)}}.
$$
This equality can be derived from
\begin{align*}
&\frac{|f(z_1)-f(z_2)|^2}{{(1-|f(z_1)|^2)(1-|f(z_2)|^2)}}\\
=&\frac
{\big|(z_1-a)(1-\bar a z_2)-(1-\bar a z_1)(z_2-a)\big|^2}
{
\big(|1-\bar az_1|^2-|z_1-a|^2\big)\cdot
\big(|1-\bar az_2|^2-|z_2-a|^2\big)
}\\
=&\frac
{
\big|(1-a\bar a)(z_1-z_2)\big|^2
}
{
\big(|1-\bar az_1|^2-|z_1-a|^2\big)\cdot
\big(|1-\bar az_2|^2-|z_2-a|^2\big)
}
\end{align*}
and
\begin{align*}
&|1-\bar az_1|^2-|z_1-a|^2
=(1-\bar az_1)(1-a\bar z_1)-(z_1-a)(\bar z_1-\bar a)\\
=&(1-a \bar a)(1-z_1\bar z_1)=(1-a\bar a)(1-|z_1|^2).
\end{align*}
and similarly
$$
|1-\bar az_2|^2-|z_2-a|^2=
(1-a\bar a)(1-|z_2|^2).
$$
\end{proof}

\bibliography{rigidity}
\bibliographystyle{alpha}

\end{document}